\newcommand\hw@temp{\phi}
\let\phi\varphi
\let\varphi\hw@temp
\renewcommand\hw@temp{\epsilon}
\let\epsilon\varepsilon
\let\varepsilon\hw@temp
\DeclarePairedDelimiterX{\set}[1]\lbrace\rbrace{\setaux #1||\endsetaux}
\def\setaux#1|#2|#3\endsetaux{\if\relax\detokenize{#2}\relax #1 \else #1 \;\delimsize\vert\; #2 \fi}
\newcommand\at[1]{#1\textsuperscript{@}}
\newcommand\nothing{--}
\newcommand\match[2]{\ensuremath{#2\mathbin{@}#1}}
\newcommand{\whitesq}{\ensuremath{\square}}
\newcommand{\blacksq}{\ensuremath{\blacksquare}}
\protected\def\ccell#1#{%
    \kern-\fboxsep
    \@ccell{#1}%
}
\def\@ccell#1#2#3{%
    \colorbox#1{#2}{#3}%
    \kern-\fboxsep
}
\definecolor{baseorange}{HTML}{D55E00}
\definecolor{baseblue}{HTML}{0072B2}
\definecolor{basegreen}{HTML}{009E73}
\definecolor{textcolor}{HTML}{595959}
\definecolor{tuescarlet}{RGB}{200,24,24}
\newtheorem{axiom}{Axiom}[section]
\newtheorem{definition}[axiom]{Definition}
\newtheorem{lemma}[axiom]{Lemma}
\newtheorem{theorem}[axiom]{Theorem}
\title{Fair Schedules for Single Round Robin Tournaments with Ranked Participants}
\author[1]{Sten Wessel}
\author[1]{Cor Hurkens}
\author[1]{Frits Spieksma}
\affil[1]{Department of Mathematics and Computer Science, Eindhoven University of Technology, The Netherlands}
\affil[ ]{\normalsize\texttt{\{s.wessel,c.a.j.hurkens,f.c.r.spieksma\}@tue.nl}}
\date{}
\begin{document}
\maketitle

\begin{abstract}\noindent
    We introduce a new measure to capture fairness of a schedule in a single round robin (SRR) tournament when participants are ranked by strength. To prevent distortion of the outcome of an SRR tournament as well as to guarantee equal treatment, we argue that each participant should face its opponents when ranked by strength in an alternating fashion with respect to the home/away advantage. Here, the home/away advantage captures a variety of situations.
    We provide an explicit construction proving that so-called \emph{ranking-fair} schedules exist when the number of participants is a multiple of 4. Further, we give a formulation that outputs ranking-fair schedules when they exist. Finally, we show that the most popular method to come to a schedule for an SRR tournament, does not allow ranking-fair schedules when the number of teams exceeds 8. These findings impact the type of schedules to be used for SRR tournaments.

    \bigskip

    \noindent\textbf{Keywords:} Round Robin Tournaments, Fairness, Sport Scheduling
\end{abstract}

\setcounter{section}{-1}
\section{Prologue}
\label{sec:prologue}
The Tata Steel Chess tournament is a chess contest where the fourteen best players in the world (measured by their so-called Elo rating) are invited to play a single round robin tournament.
In the 2002 edition of this tournament (then called the Corus Chess Tournament), the \#2 player of the world, called Michael Adams, faced the strongest 7 other players while playing black, and the remaining 6 weakest players while playing white.
This imbalance had the  potential to distort the outcome of this tournament, and is an extreme example of a more frequently occurring situation where the schedule may favor some players over others.
We show how to remedy this, and arrive at fair schedules for single round robin tournaments where a ranking of the players is prespecified.

\section{Introduction}
\label{sec:intro}
There are two basic formats when designing (sports) competitions: round robin and knock out.
In round robin tournaments, every pair of teams (or players) meets, while in knock out tournaments a team that loses a game is immediately eliminated from the tournament.
In practice, all kinds of variations of these basic formats are used; in particular, a format that first uses a round robin for a number of groups of teams, and then a knockout for the group winners is a popular mix of these two basic formats.
We refer to Devriesere et al.~\cite{devcsagoo2024} and Ribeiro et al.~\cite{riburrwer2023} and the references contained therein for more information on tournament design.

Our focus is exclusively on single round robin tournaments; these are tournaments where every pair of teams meets exactly once.
In addition, we assume that there is a given ranking of the teams ordering them according to their strength.
In many cases such a ranking is explicitly known; indeed, for most sports, there is a ranking of the best players or teams (e.g., the Elo rating in chess, the ATP/WTA ranking in tennis, the FIFA World ranking in football, etc.).

Finally, we assume the existence of an ``asymmetry'' in any particular game; by an asymmetry, we mean the existence of a (dis)advantage in a game for one player compared to the other player. For instance, in the game of chess, playing with white is considered advantageous compared to playing with black.
In fact, more generally, when a match is played in either of the team's venues, there is the so-called ``home advantage''; an effect that is well studied in literature. It refers to the observation that the team playing home has an increased probability of winning the match. We mention the following studies (without aiming to be complete): Pollard and Pollard~\cite{polpol2005} for an overview, Benz et al.~\cite{benblilop2024} deal with American Football, Cabarkapa et al.~\cite{cabdeacic2023} deal with basketball, Pollard~\cite{pol2008} deals with football, Losak and Sabel~\cite{lossab2021} deal with baseball, Adie et al.~\cite{adirenpol2022} deal with cricket, and Alexandros et al.~\cite{alepanmil2012} deal with volleyball. In all these cases, a home advantage is found to be present with the interesting exception that, for the case of cricket, a disadvantage for the home team is found, attributed to the referee's intention to be fair in spite of the home crowd~\cite{adirenpol2022}. In any case, summarizing this discussion, in single round robin tournaments that are played in venues that correspond to the participating teams, the presence of an asymmetry due to home (dis)advantage is well-motivated.

Although we focus on the single round robin format, our results directly generalize to tournaments where teams meet each other an odd number of times.
Also when the tournament is in double round robin format, where every pair of teams meet exactly twice, our results are applicable when the schedule consists of two halves that are scheduled as a single round robin.

To state our contributions, we proceed by considering the schedule of a single round robin tournament.
For reasons of convenience, we refer to the asymmetry by using home/away terminology, i.e., the presence or absence of the home advantage; however, we emphasize that this home/away terminology represents a variety of situations such as playing with white or black in chess, or having the right to bat last, or serve first.

Now, when considering a schedule, focus in literature has always been on balancing home and away games over the rounds, i.e., to balance the asymmetry over time.
Indeed, ideally, a team plays alternatingly home and away.
Achieving this property has been an important driver of many optimization approaches in scheduling round robin tournaments (see Elf et al.~\cite{elfjunrin2003}, Rasmussen and Trick~\cite{rastri2007} and van 't Hof et al.~\cite{vanposbri2010}).

In this paper, we argue that it is also of fundamental importance to balance the home advantage over the opponents {\em ranked by strength}. Indeed, we claim that balancing the home advantage evenly over the ranked opponents is needed to ensure that
\begin{itemize}
    \item[(i)] no team is disadvantaged compared to other teams (achieving {\em fairness}),
    \item[(ii)] no distortion of the final result is present (achieving {\em efficacy}).
\end{itemize}
Ad(i) Indeed, suppose that a strong team faces the strongest half of their opponents playing away, and the weakest half of the opponents playing home (see Section~\ref{sec:prologue} for a real-life example of this phenomenon).
In many cases, this strong team is severely disadvantaged, as the team is likely to win against the weaker teams irrespective of playing home or away, while the team will struggle against the stronger teams now that the team is playing away against all other strong teams.
Or, alternatively, suppose that a weak team faces the weakest half of the opponents at home, and the strongest half of the opponents away.
Again, one can hypothesize that this team has an advantage over other weak teams, as it faces all its ``colleague'' weak teams at home, while playing away for the games that in all likelihood will be lost anyway.
Each of these examples shows that some teams can have an advantage over other teams when there is no balance of the asymmetry over the opponents ranked by strength.

Ad(ii) Efficacy refers to the capability of a tournament to generate a ranking of the teams that is consistent with their strength (see, e.g., Szilai et al.~\cite{szibircsa2022}). Indeed, from an organizer's point of view, it makes sense to construct a schedule where each team faces its opponents in a way where the home/away asymmetry is spread evenly over the opponents ranked by strength.
In the absence of this balance, the final standings may not represent the true strength of the participants, and the tournament's efficacy is low.

Here, we explore the issue of generating schedules where each team's opponents when ranked by strength have the home advantage alternatingly; we provide the following results:
\begin{itemize}
\item We define the notion of a ranking-fair schedule (Section~\ref{sec:probdes}), and introduce a measure capturing to what extent a single round robin schedule is ranking-fair (Section~\ref{sec:measure}).
\item As our main result, we provide an explicit construction yielding a ranking-fair schedule when the number of teams is a multiple of four. Further, we give a mathematical programming formulation that can be used to find ranking-fair schedules when they exist (Section~\ref{sec:results}).
\item We consider the so-called canonical pattern set (CPS), and prove that the CPS does not allow ranking-fair schedules when the number of teams exceeds 8 (Section~\ref{sec:cps}).
\end{itemize}
In Section~\ref{sec:practice}, we give some motivating real-life examples coming from the Tata Steel chess tournament, Danish football league, and the Dutch baseball league.

\section{Problem Description} \label{sec:probdes}
We let $n$ denote the number of teams; we will assume $n$ is even.
Consequently, the number of rounds is $n-1$, and we use $R = \set{1, \dots, n - 1}$ to refer to the set of rounds.
We use~$T = \set{1, \dots, n}$ as the set of teams, and we assume that each team has its own {\em venue}. Furthermore, we assume that the teams~$1, \dots, n$ are sorted based on a predetermined ranking, where team~$1$ is the strongest team and team~$n$ is the weakest team.
A \emph{schedule} specifies for every game between teams~$i$ and~$j$ in which round it is played and in which of the two venues it is played, i.e., whether team~$i$ or team~$j$ plays at home.
A schedule is feasible when every team plays exactly one game in every round, and meets all other teams during all rounds.

In this work, we introduce the concept of \emph{ranking fairness} of a schedule.
\begin{definition}
    The \emph{ranking home-away pattern (ranking HAP)} of a team~$i$, $1 \le i \le n$ is a vector $p = (p_1, p_2, \dots, p_{n-1})$, where~$p_j \in \set{H, A}$ specifies whether team~$i$ plays home or away against its~$j$-th strongest opponent.
\end{definition}

In other words, the ranking HAP of a team is a home-away sequence, where the games are ordered by decreasing strength of its opponents.
For example, in a $6$-team competition, if the ranking HAP for team~$3$ is $(H, H, A, A, A)$, which we will also write as $HHAAA$, then this team plays at home against teams~$1$ and $2$, and away against teams~$4$, $5$, and $6$.
As sketched in the introduction section, our aim is to balance the home advantage over the ranked opponents.
We consider the asymmetry to be perfectly balanced when home and away games alternate in the ranking HAP, i.e., the ranking HAP of a team is either $HAHAH\dots$ or~$AHAHA\dots$, as formulated in the next definition.

\begin{definition}
    A schedule is \emph{ranking-fair} when the ranking HAP of every team is alternating.
\end{definition}
In a ranking-fair schedule, the ranking HAP of team~$1$ either starts with a home game or an away game against team~$2$.
Note that either choice fixes the ranking HAP of team~$2$, as the first venue in its ranking HAP it must be the complement of the first venue of the ranking HAP of team~$1$.
This argument continues to fix the ranking HAPs of the lower-strength teams, and therefore, there are merely two options for the ranking HAPs of all teams of any feasible ranking-fair schedule.
Namely, the one displayed in Table~\ref{tab:rankingHAPset}, where team~$1$ plays team~$2$ at home, or the complement of this schedule, where every home is replaced with an away game and vice versa.

\begin{table}
    \centering
    \caption{Venues for team~$i$ (row) when playing against team~$j$ (column) in a ranking-fair schedule.}\label{tab:rankingHAPset}
    \begin{tabular}{c|ccccccc}
        \toprule
        $i$ \textbackslash\ $j$ & $1$ & $2$ & $3$ & $4$ & $\cdots$ & $n-1$ & $n$ \\
        \midrule
        $1$ & -- & $H$ & $A$ & $H$ & $\cdots$ & $A$ & $H$ \\
        $2$ & $A$ & -- & $H$ & $A$ & $\cdots$ & $H$ & $A$ \\
        $3$ & $H$ & $A$ & -- & $H$ & $\cdots$ & $A$ & $H$ \\
        $4$ & $A$ & $H$ & $A$ & -- & $\cdots$ & $H$ & $A$ \\
        $\vdots$ & $\vdots$ & $\vdots$ & $\vdots$ && $\ddots$ && $\vdots$ \\
        $n-1$ & $H$ & $A$ & $H$ & $A$ & $\cdots$ & -- & $H$ \\
        $n$ & $A$ & $H$ & $A$ & $H$ & $\cdots$ & $A$ & -- \\
        \bottomrule
    \end{tabular}
\end{table}

In the remainder of this work, we assume that a ranking-fair schedule has ranking HAPs according to Table~\ref{tab:rankingHAPset}, i.e., team~$1$ plays team~$2$ at home, as otherwise we can always obtain such a schedule by inverting the venues for every match.
Notice that in a ranking-fair schedule, the venue of a match is completely determined by the \emph{parity} of the two teams, as summarized in the following lemma.
\begin{lemma} \label{lem:rf-parity}
    In a ranking-fair schedule, consider the match between the two teams~$i, j \in T$.
    If~$i$ and $j$ have the same parity (i.e., both odd or both even), the strongest team plays away.
    If~$i$ and~$j$ have a different parity, the strongest team plays at home.
\end{lemma}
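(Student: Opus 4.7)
The plan is to unpack Table~\ref{tab:rankingHAPset} together with the alternating property of a ranking-fair schedule, and then chase indices. There is no deep obstacle here: the work is entirely in being careful about the off-by-one shift that occurs because team~$i$ is omitted from its own ranked opponent list.

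The key intermediate fact I would establish first is that, under the convention fixed in the paragraph preceding the lemma, the ranking HAP of team~$i$ starts with $H$ iff $i$ is odd (and with $A$ iff $i$ is even). For $i = 1$ this is immediate from the convention, and for $i \geq 2$ it follows by reading the first column of Table~\ref{tab:rankingHAPset}: team~$i$'s strongest opponent is team~$1$, and the $(i,1)$-entry of the table is $H$ exactly when $i$ is odd. Combined with the assumption that the ranking HAP alternates, this shows that position~$k$ of team~$i$'s ranking HAP equals $H$ if and only if $i$ and $k$ have the same parity, i.e.\ iff $i+k$ is even.

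Now consider two teams $i,j \in T$ and, without loss of generality, assume $i < j$, so that team~$i$ is the stronger. In team~$i$'s list of opponents ordered from strongest to weakest, the opponents with a lower index than $i$ occupy positions $1,\dots,i-1$, and the opponents with a higher index $m > i$ occupy position $m - 1$. In particular, team~$j$ appears at position $k = j - 1$ in team~$i$'s ranking. By the intermediate fact above, team~$i$ plays at home against team~$j$ iff $i + (j-1)$ is even, equivalently iff $i + j$ is odd, equivalently iff $i$ and $j$ have different parities.

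Taking contrapositives yields the two statements of the lemma: when $i$ and $j$ have different parities, the stronger team~$i$ plays at home (so team~$j$ plays away); and when $i$ and $j$ have the same parity, the stronger team~$i$ plays away. The only care needed in writing this up formally is the index shift $k = j-1$, which accounts for the fact that a team does not appear in its own opponent list.
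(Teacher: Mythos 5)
Your proof is correct and is essentially the argument the paper leaves implicit: the paper states this lemma without proof, treating it as immediate from Table~\ref{tab:rankingHAPset}, and your write-up (first entry $H$ iff $i$ odd, alternation, and the index shift $k=j-1$) is exactly the careful verification of that observation. No gaps.
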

It follows directly that a schedule is ranking-fair if and only if the schedule satisfies Lemma~\ref{lem:rf-parity} for all matches.

A fundamental, well-studied property of SRR schedules is the number of so-called \emph{breaks} that occur in a schedule.
Given a schedule, the \emph{home-away pattern} (HAP) of a team is defined as the vector~$h = (h_1, \dots, h_{n-1})$, where~$h_r \in \set{H, A}$ specifies whether that team plays home or away in round~$r \in R$.
We consider the HAPs to be circular, and thus to simplify notation we define~$h_0 \coloneqq h_{n-1}$.

A team has a \emph{break} in round $r \in R$ when $h_{r - 1} = h_r$.
When~$h_{r-1} = h_r = H$, we call the break a \emph{home break}, otherwise it is an \emph{away break}.
The \emph{HAP set} of a schedule denotes the collection of HAPs of all teams.
We call a HAP set \emph{feasible} if there exists a feasible schedule corresponding to this HAP set.

Since the number of rounds is odd and the fact that we consider a HAP to be circular, a HAP needs to have at least one break.
We call a HAP \emph{single-break} if it has exactly one break.
Similarly, we call a HAP set single-break if all HAPs in the HAP set are single-break.

Single-break HAP sets have received considerable attention in the existing literature (De Werra~\cite{dew1981}, Rasmussen and Trick~\cite{rastri2008}, Goossens and Spieksma~\cite{goospi2011}, Lambers et al.~\cite{lamgoospi2023}).
De Werra~\cite{dew1981} showed that any feasible single-break HAP set $\mathcal H$ is \emph{complementary}, i.e., for every pattern in~$\mathcal H$, also the complement pattern is in~$\mathcal H$, with all home games replaced by away games and vice versa.
Also notice that no two teams in feasible schedule can have identical HAPs, as then they would never be able to play each other.
As a consequence, in each round either zero or two HAPs will have a break, and thus the breaks in a feasible single-break schedule occur in exactly~$n/2$ different rounds.
Let us denote these by~$r_1, \dots, r_{n/2}$, sorted in increasing order.
Now, we define~$d_i \coloneqq r_{i + 1} - r_i$ for~$i = 1, \dots, n/2$, where~$r_{n/2+1} \coloneqq r_1 + n - 1$.
The vector~$D = (d_1, \dots, d_{n/2})$ specifies the gaps between the breaks, and we refer to this as the \emph{break-gap representation} or the \emph{D-sequence} of the single-break HAP set.
Note that a D-sequence is feasible if and only if all cyclic and/or reflection permutations of the D-sequence are feasible, as the rounds of a round-robin schedule can be cyclically shifted or played in reverse other (see Lambers et al.~\cite{lamgoospi2023}).
Hence, without loss of generality, we always consider the lexicographically largest sequence among the set of equivalent D-sequences.

One popular HAP set in practice is the so-called canonical pattern set (CPS), which has the D-sequence $22\dots21$ (De Werra~\cite{dew1980}).
The CPS is widely used in practice to schedule competitions.
For example, the international chess federation (FIDE) uses the so-called Berger tables to schedule tournaments, which are based on the CPS\@.
We dedicate Section~\ref{sec:cps} to ranking fairness of schedules corresponding to this particular HAP set.

\section{A Measure for Ranking Fairness} \label{sec:measure}
When a schedule is not ranking-fair, it is relevant to have a measure that represents the amount of imbalance.
We express the ranking fairness of a schedule by measuring how much the ranking HAP deviates from the HAP that is perfectly alternating.
For a team~$t$, we consider all sub-patterns of the ranking HAP, i.e., each consecutive sub-interval of the ranking HAP characterized by the leftmost and rightmost positions~$[i, j]$, where~$1 \le i < j \le n - 1$.
Let~$H^t_{i,j}$ denote the number of home games of team~$t$ that occur in this sub-pattern.
In a ranking-fair schedule, $H^t_{i,j} \in \left\{ \left\lfloor \frac{j - i + 1}{2} \right\rfloor, \left\lceil \frac{j - i + 1}{2} \right\rceil \right\}$, i.e., (approximately) half of the games within each sub-pattern are home games.
To measure the fairness of any schedule, we calculate for every sub-pattern how much the number of home games deviates from this.
Let
\begin{equation}\label{eq:deltat}
    \Delta_t = \sum_{i=1}^{n-2}\sum_{j=i+1}^{n-1} \left| H_{i,j}^t - \frac{j - i + 1}{2} \right|
\end{equation}
denote the sum of the deviations between the actual and ideal number of home games for every sub-pattern in the ranking HAP\@.
We observe that the only minimizers of~$\Delta_t$ are the ranking HAPs that are perfectly alternating between home and away, i.e., a ranking-fair HAP\@.
The measure~$\Delta_t$ is maximal for, among others, the pattern that starts with~$n/2$ home games against the strongest opponents, followed by~$n/2-1$ away games against the weakest opponents.
We normalize~$\Delta_t$ to ensure that the fairness measure always assumes values between zero and one, such that zero yields a ranking-fair HAP and one coincides with a ranking HAP that maximizes~$\Delta_t$.
Our fairness measure~$F_t$ for the ranking HAP of team~$t$ is then defined as
\begin{equation}
\label{eq:Ft}
    F_t = \frac{\Delta_t - \frac 1 8 (n-2)^2}{\frac{1}{24} n (n-1) (n-2)}\,.
\end{equation}
The correction term $(n-2)^2/8$ ensures that a ranking-fair schedule yields~$F_t = 0$, as without correction every odd-length interval adds~$1/2$ to the measure.
The normalization ensures that the maximum of~$F_t$ is~$1$.
We are now able to define the ranking fairness of a schedule as follows:
\begin{definition}
The ranking fairness of a schedule equals $F = \frac 1 n \sum_{t \in T} F_t$.
\end{definition}
In other words, the ranking fairness of a schedule equals the average fairness value of the teams.

While one can think of various ways of measuring the deviation of an arbitrary ranking HAP from the alternating ranking HAP, we point out that the definition phrased above has attractive properties. Indeed, it is symmetric in the sense that viewing the opponents from weak to strong (instead from strong to weak) has no impact on the value of $F$. Another property is summarized in the lemma below.

\begin{lemma}
    $F = 0$ if and only if the schedule is ranking fair.
\end{lemma}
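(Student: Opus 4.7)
The plan is to show, for every team~$t$, that $F_t\ge 0$ with equality iff the ranking HAP of~$t$ is alternating; since $F$ is a nonnegative average of the $F_t$'s, the lemma will follow.

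First I would handle the easy direction, which is essentially bookkeeping confirming that the correction term in~\eqref{eq:Ft} was chosen correctly. If the ranking HAP of team~$t$ is alternating, then for a sub-interval~$[i,j]$ of length $L=j-i+1$, we have $H^t_{i,j}=L/2$ when $L$ is even and $H^t_{i,j}\in\{(L-1)/2,(L+1)/2\}$ when $L$ is odd. So even-length sub-intervals contribute~$0$ to~$\Delta_t$ and each odd-length sub-interval contributes exactly~$1/2$. A short count (there are $n-L$ sub-intervals of length~$L$, and summing $n-L$ over odd $L\in\{3,5,\dots,n-1\}$ gives $(n-2)^2/4$) yields $\Delta_t=(n-2)^2/8$, hence $F_t=0$.

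For the harder direction, I would establish the lower bound $\Delta_t\ge (n-2)^2/8$ for \emph{any} ranking HAP, by the same two observations: each term $|H^t_{i,j}-L/2|$ is nonnegative, and whenever $L$ is odd the quantity $L/2$ is a half-integer while $H^t_{i,j}$ is an integer, so the term is at least~$1/2$. Summing over the $(n-2)^2/4$ odd-length sub-intervals gives the bound, so $F_t\ge 0$ and therefore $F\ge 0$. Hence $F=0$ forces $F_t=0$ for every team~$t$, which in turn forces the lower bound to be attained with equality in every term.

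The final step is to notice that equality in the terms corresponding to sub-intervals of length $L=2$ alone is already enough: for each $i\in\{1,\dots,n-2\}$, $H^t_{i,i+1}\in\{0,1,2\}$ must satisfy $|H^t_{i,i+1}-1|=0$, so $H^t_{i,i+1}=1$, meaning positions $i$ and $i+1$ of the ranking HAP carry opposite venues. Running this over all consecutive pairs shows the ranking HAP alternates, i.e., team~$t$ has an alternating ranking HAP. Thus $F=0$ implies ranking-fairness, completing the equivalence. I do not anticipate a genuine obstacle; the only thing to double-check is the arithmetic of the correction term $(n-2)^2/8$, which coincides with the count of odd-length sub-intervals times~$1/2$.
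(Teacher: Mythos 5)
Your proof is correct and follows exactly the route the paper has in mind: the paper omits the argument, declaring it ``straightforward,'' but its remarks that ``every odd-length interval adds $1/2$'' and that the correction term $(n-2)^2/8$ compensates for this are precisely your parity lower bound and your count of $(n-2)^2/4$ odd-length sub-intervals. Your closing observation that equality in the length-$2$ terms alone already forces the ranking HAP to alternate cleanly settles the ``only if'' direction, so nothing is missing.
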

In other words, ranking-fair schedules minimize~$F$, and it is straightforward to show that ranking-fair schedules are the only minimizers of~$F$.

\section{Results}
\label{sec:results}

In this section, we show in which cases a single-break, ranking-fair schedule exists.
We distinguish between whether the number of teams is a multiple of four, or not.

\subsection{The Case $n=4k$} \label{sec:existence}
Let us first restrict to the case when~$n$ is a multiple of four.
We provide a schedule that is, by construction, ranking-fair, and for which we show that it is indeed feasible and single-break.
To this end, we fix the home and away venues for every game as specified in Table~\ref{tab:rankingHAPset}.
We construct a schedule by giving an explicit table~$S(i, j)$, for teams~$i, j \in T$, $i \neq j$ where~$S(i, j) \in R$ specifies the round in which team~$i$ plays team~$j$.
Note that this table needs to be symmetric, i.e., $S(i, j) = S(j, i)$ for all~$i, j = 1, \dots, n$, $i > j$.
Hence, it suffices to only specify the table entries~$S(i, j)$ with $j > i$.

Define~$\pi_i(j) \coloneqq 1 + (n + 1 - i - j \mod n - 1)$.
Then, $\pi_i$ denotes a circular shift of $(n - 1, n - 2, \dots, 1)$.

We start by first filling rows~$1, 3, 5, \dots, n/2 - 1$ of the table, defined by
\begin{equation}
    S(i, j) = \begin{cases}
                  \pi_i(j) & \text{if $i \le \frac n 2$ is odd, $i < j < n$,} \\
                  \pi_i(i) & \text{if $i \le \frac n 2$ is odd, $j = n$.}
    \end{cases}
\end{equation}
The remaining odd rows, except for~$i = n - 1$, are specified by
\begin{equation}
    S(i, j) = \begin{cases}
                  \pi_i(j) & \text{if $\frac n 2 < i \le n - 3$ is odd, $i + 1 < j < n$,} \\
                  \pi_i(i) & \text{if $\frac n 2 < i \le n - 3$ is odd, $j = i + 1$,} \\
                  \pi_i(i + 1) & \text{if $\frac n 2 < i \le n - 3$ is odd, $j = n$.}
    \end{cases}
\end{equation}
We furthermore set~$S(n-1, n) = 3$.
For the even rows, we set
\begin{equation}
    S(i, j) = \begin{cases}
                  S(i - 1, j + 1) & \text{if~$i$ is even, $j$ is odd, $j > i$,} \\
                  S(i - 1, j - 1) & \text{if~$i$ is even, $j$ is even, $j > i$.}
    \end{cases}
\end{equation}
This finalizes the schedule construction.
See also Table~\ref{tab:schedule4k} for an overview of the schedule generated by~$S$.

\begin{landscape}
    \begin{table}
        \centering
        \footnotesize
        \caption{The construction of a minimum-break ranking-fair schedule for $n = 4k$ teams.
        The cell $S(i, j)$ specifies the round in which the game between $i$ and $j$ is played.
        The symbol~$\at{}$ indicates that team~$i$ plays away (and home otherwise) against team~$j$.}\label{tab:schedule4k}
        \begin{tabular}{l|ll|ll|ll|ll|ll|ll|ll|ll|ll}
            \toprule
            $i$ \textbackslash\ $j$ & $1$ & $2$ & $3$ & $4$ & $5$ & $6$ & $\frac n 2 - 1$ & $\frac n 2$ & $\frac n 2 + 1$ & $\frac n 2 + 2$ & $\frac n 2 + 3$ & $\frac n 2 + 4$ & $n - 5$ & $n - 4$ & $n-3$ & $n-2$ & $n-1$ & $n$ \\\midrule
            $1$ & \nothing & $n-1$ & \at{$n-2$} & $n-3$ & &&&&&&&&&& \at{$4$} & $3$ & \at{$2$} & $1$ \\
            $2$ & \at{$n-1$} & \nothing & $n-3$ & \at{$n-2$} &&&&&&&&&&& $3$ & \at{$4$} & $1$ & \at{$2$} \\ \midrule
            $3$ & $n-2$ & \at{$n-3$} & \nothing & $n-5$ & \at{$n-6$} & $n-7$ &&&&&&& \at{$4$} & $3$ & \at{$2$} & $1$ & \at{$n-1$} & $n-4$ \\
            $4$ & \at{$n-3$} & $n-2$ & \at{$n-5$} & \nothing & $n-7$ & \at{$n-6$} &&&&&&& $3$ & \at{$4$} & $1$ & \at{$2$} & $n-4$ & \at{$n-1$} \\ \midrule
            $5$ & $n-4$ & \at{$n-5$} & $n-6$ & \at{$n-7$} & \nothing & $n-9$ &&&&&&& \at{$2$} & $1$ & \at{$n-1$} & $n-2$ & \at{$n-3$} & $n-8$ \\
            $6$  &&&&&&&&&&&&& $1$ & \at{$2$} & $n-2$ & \at{$n-1$} & $n-8$ &\at{$n-3$} \\ \midrule
            \vphantom{$\frac n 2$}  &&&&&&&&&&&&&&&&&& \\
            \vphantom{$\frac n 2$}  &&&&&&&&&&&&&&&&&& \\ \midrule
            $\frac n 2 - 1$ &&&&&&& \nothing & $3$ & \at{$2$} & $1$ &&&&&&& \at{$\frac n 2 + 3$} & $4$ \\
            $\frac n 2$ &&&&&&& \at{$3$} & \nothing & &&&&&&&& $4$ & \at{$\frac n 2 + 3$} \\ \midrule
            $\frac n 2 + 1$ & $\frac n 2$ & \at{$\frac n 2 - 1$} & & & & \at{$3$} & $2$ & \at{$1$} & \nothing & $n-1$ & \at{$n-3$} & $n-4$ &&& \at{$\frac n 2 + 3$} & $\frac n 2 + 2$ & \at{$\frac n 2 + 1$} & $n-2$ \\
            $\frac n 2 + 2$ & \at{$\frac n 2 - 1$} & $\frac n 2$ &&& \at{$3$} & $4$ & \at{$1$} & $2$ & \at{$n-1$} & \nothing & $n-4$ & \at{$n-3$} & & & $\frac n 2 + 2$ & \at{$\frac n 2 + 3$} & $n-2$ & \at{$\frac n 2 + 1$} \\ \midrule
            \vphantom{$\frac n 2$} &&&&&&&&&&&&&&&&&& \\
            \vphantom{$\frac n 2$} &&&&&&&&&&&&&&&&&& \\ \midrule
            $n - 5$ &&&&&&&&&&&&& \nothing & $11$ & \at{$9$} & $8$ & \at{$7$} & $10$ \\
            $n - 4$ &&&&&&&&&&&&& \at{$11$} & \nothing & $8$ & \at{$9$} & $10$ & \at{$7$} \\ \midrule
            $n - 3$ & $4$ & \at{$3$} & $2$ & \at{$1$} & $n-1$ & \at{$n-2$} &&&&&&& $9$ & \at{$8$} & \nothing & $7$ & \at{$5$} & $6$ \\
            $n - 2$ & \at{$3$} & $4$ & \at{$1$} & $2$ & \at{$n-2$} & $n-1$ &&&&&&& \at{$8$} & $9$ & \at{$7$} & \nothing & $6$ & \at{$5$}  \\ \midrule
            $n - 1$ & $2$ & \at{$1$} & $n-1$ & \at{$n-4$} & $n-3$ & \at{$n-8$} &&&&&&& $7$ & \at{$10$} & $5$ & \at{$6$} & \nothing & $3$ \\
            $n$ & \at{$1$} & $2$ & \at{$n-4$} & $n-1$ & \at{$n-8$} & $n-3$ & $4$ &&&&&& \at{$10$} & $7$ & \at{$6$} & $5$ & \at{$3$} & \nothing \\ \bottomrule
        \end{tabular}
    \end{table}
\end{landscape}

We first show that the schedule specified by~$S$ can in fact be played.
\begin{lemma}\label{lem:n4k-feasible}
    The schedule specified by~$S$ is feasible.
\end{lemma}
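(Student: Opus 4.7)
The plan is to prove that for each team $t \in T$, the map $j \mapsto S(t,j)$ on $T \setminus \{t\}$ is a bijection onto $R$. Because the table $S$ is symmetric by construction and specifies a round for every unordered pair of teams, this row-bijection property is equivalent to every round $r \in R$ inducing a perfect matching on $T$, which is exactly the feasibility requirement.

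I would first record the basic facts about $\pi_i$ that drive the construction. From $\pi_i(j) = 1 + ((n+1-i-j) \bmod (n-1))$ it follows that $\pi_i$ is a bijection from $\{1,\ldots,n-1\}$ to $\{1,\ldots,n-1\}$, that $\pi_i(j) = \pi_j(i)$, and that $\pi_i(j) = r$ iff $i + j \equiv n+2-r \pmod{n-1}$. Consequently, the ``base'' circle-method schedule $S_0$ given by $S_0(i,j) = \pi_i(j)$ for $i,j \in \{1,\ldots,n-1\}$ together with $S_0(i,n) = \pi_i(i)$ is a well-known feasible SRR schedule: in every round $r$, team $n$ plays the unique team $i^\ast_r$ with $2 i^\ast_r \equiv n+2-r \pmod{n-1}$ (existence and uniqueness use that $n-1$ is odd), while the remaining teams pair up according to $i+j \equiv n+2-r \pmod{n-1}$. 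Moreover, the identity $\pi_{n-1}(n-1) = 1 + ((3-n) \bmod (n-1)) = 3$ shows that the separately specified entry $S(n-1,n) = 3$ is simply the base value, so row $n-1$ requires no further modification.

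I would then view $S$ as obtained from $S_0$ by two types of local modifications, and verify that each preserves the round-matching property. Modification (a): for each odd $i$ with $n/2 < i \le n-3$, the entries in columns $i+1$ and $n$ of row $i$ are swapped, i.e.\ $S(i,i+1) = \pi_i(i) = S_0(i,n)$ and $S(i,n) = \pi_i(i+1) = S_0(i,i+1)$. Modification (b): for each even $i$ and $j > i$, the rule $S(i,j) = S(i-1, j \pm 1)$ realizes row $i$ as row $i-1$ with the columns permuted by the consecutive transpositions $(i+1 \leftrightarrow i+2), (i+3 \leftrightarrow i+4), \ldots$. The crucial observation for (b) is that coupling the row-swap $(i-1 \leftrightarrow i)$ with the column-swap $(j-1 \leftrightarrow j)$ for every consecutive odd/even pair of teams is an involution on unordered pairs of $T$; applied simultaneously across the table, it transports a perfect matching in round $r$ to another perfect matching in round $r$. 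The modifications of type (a) are the local adjustment needed so that the reassignments of matches $\{i,i+1\}$ and $\{i,n\}$ under (b) close up into valid matchings in both affected rounds.

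The main obstacle will be the bookkeeping around the combined effect of the two modifications, especially at the boundary rows $i \in \{n/2, n/2+1, n-3, n-2\}$, where several modifications act on the same round. Rather than trying a purely abstract invariance argument, I would carry out a round-by-round audit: for each $r \in R$, list the perfect matching induced by $S_0$, apply the reassignments from (a) and (b), and check directly that what remains is again a perfect matching on $T$. The case $n = 8$ already contains both types of modification and can serve as a sanity check; the general $n = 4k$ case then follows by exploiting the structural regularity of the circle method together with the strictly local nature of each modification.
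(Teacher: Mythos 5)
Your setup is sound and in fact close in spirit to the paper's: the paper likewise reduces feasibility to showing that every row of $S$ is a permutation of $R$, and its treatment of the even rows is exactly your observation (b), namely that row $i$ is row $i-1$ with the columns permuted by the transpositions $(i+1,i+2),(i+3,i+4),\dots$ Your preliminary facts about $\pi_i$ (bijectivity, the symmetry $\pi_i(j)=\pi_j(i)$, the congruence $i+j\equiv n+2-r \pmod{n-1}$, and $\pi_{n-1}(n-1)=3$) are all correct, as is your identification of the construction as the circle method with the two local modifications (a) and (b).

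The genuine gap is that the decisive verification is never performed: you announce that you ``would carry out a round-by-round audit'' and that the general case ``then follows by exploiting the structural regularity,'' but that audit \emph{is} the content of the lemma, and it is not a formality. Modification (a) does not preserve feasibility on its own: swapping columns $i+1$ and $n$ in row $i$ moves the match between $i$ and $i+1$ into round $\pi_i(i)$, where team $i+1$ already has an opponent in $S_0$ and team $n$ loses its opponent; only the simultaneous reassignments from (b) in rows $i+1$, $n$, and the displaced partners can restore perfect matchings in rounds $\pi_i(i)$ and $\pi_i(i+1)$, and these repairs for different odd $i>n/2$ interact in the same rounds. Whether they compose into a perfect matching for every $r$ is exactly what has to be computed, and your proposal contains no such computation. (The paper instead checks each row directly, e.g.\ for odd $i\le n/2$ by splitting the row at the diagonal and reading the entries $j<i$ off the $i$-th column.) A smaller issue: your ``crucial observation'' for (b) --- that the coupled row/column swap ``transports a perfect matching in round $r$ to another perfect matching in round $r$'' --- presupposes the perfect matching whose existence is in question. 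The correct use of the invariance of $S$ under simultaneously swapping each team $2m-1$ with its partner $2m$ in both coordinates is this: once every \emph{odd} row is shown to be a permutation of $R$, each even team occurs in round $r$ exactly as often as its odd partner does, so the odd rows suffice. That reduction is valid, but it sends you straight back to the unperformed audit of the odd rows.
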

\begin{proof}
    Each team should play exactly one game in every round, each against a different opponent.
    Thus, every row of the table must contain all rounds~$1, \dots, n-1$ exactly once.
    We consider the following cases:
    \begin{itemize}
        \item Team~$i$ is odd, $i \le n/2$.
        We split the row in two parts.
        The last part, for indices~$j$ with~$i < j \le n$, contains rounds~$n - 1, n - 2, \dots, n - i + 2$ and $1, 2, \dots, n - 2i + 2$, by the definition of~$\pi_i$.
        For the first part, for indices~$j$ with~$1 \le j < i$, we first observe by the symmetry of the table that~$S(i, j) = S(j, i)$, and shift our attention to the~$i$-th column.
        There are~$\frac{i - 1}{2}$ odd indices~$j < i$, and because the rows above~$i$ are a cyclic shift, these cells contain rounds~$n - 2i + 3, n - 2i + 5, \dots, n - i$.
        The even indices~$j$ complete the missing rounds~$n - 2i + 4, n - 2i + 6, \dots, n - i + 1$.
        \item Team~$i$ is even.
        Even rows are a permutation of the odd row directly above, where the rounds of~$(j, j+1)$ ($j$ odd) are permuted.
        Hence, even rows contain all rounds exactly once.
    \end{itemize}
    Hence, each row of~$S$ is a permutation of $(n-1, n-2, \dots, 1)$, completing the proof.
\end{proof}

\begin{lemma}\label{lem:n4k-singlebreak}
    The schedule specified by~$S$ is single-break.
\end{lemma}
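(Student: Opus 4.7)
The plan is to reduce the single-break property to a parity condition on each team's opponent sequence, and then verify it by case analysis following the block structure of~$S$. By Lemma~\ref{lem:rf-parity}, writing $\sigma(k) \in \{0,1\}$ for the parity of team~$k$, denoting by $j_r$ the opponent of team~$i$ in round~$r$, and using the Iverson bracket~$[\,\cdot\,]$, team~$i$'s HAP value satisfies $h^i_r = 1 \oplus \sigma(i) \oplus \sigma(j_r) \oplus [i < j_r]$. Hence, defining $\psi^i(j) := \sigma(j) \oplus [j < i]$, team~$i$ has a break in round~$r$ (with the circular convention $j_0 := j_{n-1}$) if and only if $\psi^i(j_{r-1}) = \psi^i(j_r)$. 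It therefore suffices to show, for every team~$i$, that the circular word $\psi^i(j_1)\,\psi^i(j_2)\cdots \psi^i(j_{n-1})$ contains exactly one pair of equal consecutive entries.

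The next step is to invert the definition of~$S$ to write $j_r$ explicitly for each team. For odd $i$ with $1 \le i \le n/2 - 1$, the formula for $\pi_i$ gives $j_r \equiv n+2-i-r \pmod{n-1}$ with $j_r \in \{1,\ldots,n-1\}$, except at the unique round $r^* = \pi_i(i)$ where $j_{r^*} = n$. For odd $i$ with $n/2 < i \le n-3$, the same formula applies, except that at the two rounds $\pi_i(i)$ and $\pi_i(i+1)$ the natural values $i$ and $i+1$ are replaced by $i+1$ and $n$ respectively. The even rows are obtained from the odd row immediately above via $S(i,j) = S(i-1, j\pm1)$, which in round language swaps the round assignments of each adjacent odd-even pair of opponents. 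The last two rows are then read off using $S(n-1, n) = 3$ and these rules. With these explicit opponent sequences in hand, the counting of equal consecutive pairs in $\psi^i(j_r)$ rests on three elementary observations: (a) since $n-1$ is odd, the raw parity sequence $\sigma(j_r)$ of an arithmetic progression with step $-1$ modulo $n-1$ is alternating except at a single wrap-around position; (b) replacing $\sigma$ by $\psi^i$ flips the sequence on a circularly contiguous block (the rounds with $j_r < i$), which toggles the break status only at the two endpoints of the block; and (c) each local substitution --- coming from the $\pi_i(i)$ or $\pi_i(i+1)$ corrections, or from the pair-swap rule for even rows --- toggles the break status at its two adjacent rounds. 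In every case, these toggles must combine to leave exactly one equal consecutive pair.

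The main obstacle lies in the cases with $n/2 < i \le n-2$, where three independent defect sources (the wrap-around, the exchange involving column~$n$, and the $[j<i]$ flip) interact, and where one must verify that the toggles cancel down to one surviving break rather than three. The cleanest route is to locate the round indices of all defects symbolically in terms of $i$ and $n$, then apply the endpoint-toggle observations above to each sub-case in turn. The edge cases $i \in \{n-1, n\}$ can be settled by direct inspection using the manual assignment $S(n-1,n) = 3$.
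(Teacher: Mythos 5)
Your framework is sound, and it is essentially a formalisation of the paper's own argument: Lemma~\ref{lem:rf-parity} turns the venue of every match into a function of the opponent's parity and relative rank, the opponent-by-round sequences of the odd rows are arithmetic progressions modulo $n-1$ with a handful of local alterations, and single-breakness becomes the statement that the circular word $\psi^i(j_1)\cdots\psi^i(j_{n-1})$ has exactly one repeated adjacent entry. The paper argues the same way (consecutive opponents in consecutive rounds alternate venues by ranking-fairness) and then simply lists the break round for each of the four classes of teams, so you are not on a different route --- only the bookkeeping is more explicit.

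The gap is that you stop exactly where the content of the lemma begins. Observations (a)--(c) show that the number of defects is the one wrap-around defect modified by an even number of $\pm 1$ toggles, hence odd; but ``odd'' is not ``one'', and the lemma is precisely the claim that in every case the toggles cancel down to a single survivor. You flag this as ``the main obstacle'' and then assert that the toggles ``must combine to leave exactly one equal consecutive pair'' without verifying it, which is the whole point. The verification is short but not automatic: for odd $i\le n/2$, for instance, one must notice that one endpoint of the $[j<i]$-block coincides with the wrap-around pair $(1,n-1)$ and the other with the pair $(n,i-1)$ created by substituting $n$ into the slot of $i$, so both boundary toggles annihilate existing defects and only the pair $(i+1,n)$ survives, giving the break in round $S(i,n)$ as the paper states; the cases $n/2<i\le n-3$ require the analogous check with the two-slot alteration at $\pi_i(i)$ and $\pi_i(i+1)$. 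Moreover, observation (c) misdescribes the even rows: the pair-swap rule relabels \emph{every} opponent of team $i-1$, not one position, and only after cancelling the resulting global parity flip against the change of the block from $\{j<i-1\}$ to $\{j<i\}$ does its net effect reduce to a toggle at the two pairs adjacent to round $S(i,i-1)$ --- an argument you would still need to supply, together with the deferred inspection of teams $n-1$ and $n$. In short: correct reduction, same idea as the paper, but the decisive case analysis is announced rather than carried out.
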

\begin{proof}
    By construction, the odd teams play against consecutive opponents in consecutive rounds, except when playing against team~$n$ or team~$i+1$.
    Since the schedule is ranking-fair, by construction, the rounds where the opponent is not $n$ or $i+1$ are played with alternating home and away venues.
    For the even teams~$i$, the schedule is derived from the (odd) team~$i - 1$, where we observe that:
    \begin{itemize}
        \item Team~$i$ for odd~$i$, $i \le n/2$ has a break in round~$S(i, n)$.
        \item Team~$i$ for odd~$i$, $i > n/2$ has a break in round~$S(i, i+1)$.
        \item Team~$i$ for even~$i$, $i \le n/2$ has a break in round~$S(i, i-1)$.
        \item Team~$i$ for even~$i$, $i > n/2$ has a break in round~$S(i, i-3)$. \qedhere
    \end{itemize}
\end{proof}
We thus have our main result.
\begin{theorem}
    For $n=4k$ there exists a minimum break, ranking fair schedule.
\end{theorem}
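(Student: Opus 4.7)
The plan is to observe that the theorem is essentially a direct corollary of the two preceding lemmas, together with the manner in which the construction $S$ was defined. First I would note that the home/away assignment for every match in $S$ was fixed at the outset according to Table~\ref{tab:rankingHAPset}; by Lemma~\ref{lem:rf-parity} and the definition of ranking fairness, this means that the schedule is ranking-fair by construction. So the only things left to verify are that (i)~$S$ really encodes a valid single round robin schedule and (ii)~it attains the minimum possible number of breaks.

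For (i), I would simply invoke Lemma~\ref{lem:n4k-feasible}, which shows that each row of $S$ is a permutation of $\{1,\dots,n-1\}$, so every team plays each other team exactly once and exactly one game per round. For (ii), recall that the number of rounds $n-1$ is odd, and HAPs are considered circular; hence every team's HAP must contain at least one break, and consequently any feasible schedule has at least $n$ breaks. Lemma~\ref{lem:n4k-singlebreak} then shows that the construction attains this lower bound, since it produces a single-break HAP for every team (and pinpoints the round in which each break occurs). This means $S$ simultaneously minimizes the number of breaks and is ranking-fair, proving the theorem.

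The main obstacle is not in this final assembly step but in the two lemmas themselves, which I would treat as already established. If I had to redo those, the subtle part would be Lemma~\ref{lem:n4k-singlebreak}: one must carefully check, in each of the four parity/range cases for the team index $i$, that the rounds assigned to consecutive opponents in the ranking HAP differ by exactly the amount that keeps the home/away alternation consistent with a single break. The case analysis distinguishes between the ``lifted diagonal'' formula $S(i,j)=\pi_i(j)$ for ordinary entries, and the three exceptional entries (against team $n$, team $i+1$, or team $i-3$) where the break is located; once one verifies that these exceptional positions are precisely where two consecutive ranking-HAP opponents yield the same venue parity, the single-break property falls out. Feasibility (Lemma~\ref{lem:n4k-feasible}) is easier because $\pi_i$ is a cyclic shift, and the derivation of even rows from odd rows by swapping consecutive columns preserves the permutation property.
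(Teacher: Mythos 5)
Your proposal is correct and follows essentially the same route as the paper's own proof: the schedule is ranking-fair by construction via Table~\ref{tab:rankingHAPset}, feasible by Lemma~\ref{lem:n4k-feasible}, and single-break by Lemma~\ref{lem:n4k-singlebreak}. The only addition is that you explicitly spell out why single-break implies minimum-break (the circular-HAP lower bound of one break per team), which the paper establishes earlier in Section~\ref{sec:probdes} and leaves implicit in the theorem's proof.
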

\begin{proof}
    Consider the schedule generated by~$S$.
    By specifying the home and away venues for every game as in Table~\ref{tab:rankingHAPset}, the schedule is ranking-fair by construction.
    By Lemma~\ref{lem:n4k-feasible} the schedule is feasible and by Lemma~\ref{lem:n4k-singlebreak}, the schedule is single-break, completing the proof.
\end{proof}

From the proof of Lemma~\ref{lem:n4k-singlebreak} one can observe that the breaks occur in rounds~$1, 3, 4, 7, 8, \dots, n-5, n-4, n-1$.
Hence, the D-sequence of this schedule is equal to~$3131\dots312$.
The construction is in the spirit of the popular circle method (Lambrechts et al.~\cite{lambrechts}); see also Figure~\ref{fig:4k:circle}.
We consider teams to be paired as~$(1, 2), (3, 4), \dots, (n-1, n)$.
The teams are seated at seats~$s_1, \dots, s_n$, viewed in a circular fashion, i.e., seat $s_1$ is next to seat~$s_n$.
The teams sit in order of their ranking on the seats, with team~$1$ occupying seat~$s_{2k+1}$ in rounds~$4k+1$ and $4k+2$.
In rounds~$4k+3$ and $4k+4$, team~$1$ occupies seat~$s_{2k+2}$.
As in the regular circle method, teams $i$ and $n+1-i$ meet in the first round.
Note that as $n$ is a multiple of 4, we can consider an even number $n/2$ pairs of players $2i-1,2i$, for $i=1,\dots,n/2$.
Also note that the home assignment is dictated by the ranking HAP;
for equal parity players, the higher indexed player plays at home.
An arc from $s$ to $t$ denotes that the player on seat $s$ plays at home against the player on seat $t$.
The breaks are experienced in rounds $4k+3$ at seats $s_n$ and $s_{\frac{n}2+1}$, and in rounds $4k+4$ at seats $s_1$ and $s_{\frac{n}2}$.

\begin{figure}[p]
    \centering
    \begin{subfigure}{\textwidth}
        \centering
        \medskip
\begin{tikzpicture}[xscale=.75, yscale=1]
%    \draw (8,0.5) ellipse (10cm and 1.5cm);
    \draw[blue] (0,1) -- (16,1) .. controls (16.5,0.5) .. (16,0) -- (0,0) .. controls (-0.5,0.5) .. cycle;
    \draw (0 cm,1pt) node[anchor=north] {$s_{n}$};

    \foreach \x in {1,2,3,4,5}
        \draw (\x cm-1cm,1cm-1pt) node[anchor=south] {$s_{\x}$};

    \foreach \x in {1,2,3,4}
        \draw (\x cm,1pt) node[anchor=north] {$s_{n-\x}$};

    \draw (16 cm,1cm-1pt) node[anchor=south] {$s_{\frac{n}2}$};

    \foreach \x in {1,2,3,4}
        \draw (16cm - \x cm,1cm-1pt) node[anchor=south] {$s_{\frac{n}2-\x}$};

    \foreach \x in {1,2,3,4,5}
        \draw (17cm - \x cm,1pt) node[anchor=north] {$s_{\frac{n}2+\x}$};

    \draw (6 cm,1cm-1pt) node[anchor=south] {$s_{2k-1}$};
    \draw (7 cm,1cm-1pt) node[anchor=south] {$s_{2k}$};
    \draw (8 cm,1cm-1pt) node[anchor=south] {$s_{2k+1}$};
    \draw (9 cm,1cm-1pt) node[anchor=south] {$s_{2k+2}$};
    \draw (10 cm,1cm-1pt) node[anchor=south] {$s_{2k+3}$};
    \draw (8 cm,1pt) node[anchor=north] {$s_{n-2k}$};
    \draw (9.25 cm,1pt) node[anchor=north] {$s_{n-2k-1}$};
    \draw (6 cm,1.5cm-1pt) node[anchor=south] {$p_{n-1}$};
    \draw (7 cm,1.5cm-1pt) node[anchor=south] {$p_n$};
    \draw (8 cm,1.5cm-1pt) node[anchor=south] {$p_1$};
    \draw (9 cm,1.5cm-1pt) node[anchor=south] {$p_2$};
    \draw (10 cm,1.5cm-1pt) node[anchor=south] {$p_3$};
    \draw (8 cm,-0.5cm+1pt) node[anchor=north] {$p_{n-4k}$};
    \draw (9.25 cm,-0.5cm+1pt) node[anchor=north] {$p_{n-4k-1}$};

    \foreach \x in {1,2,3,4,5,7,8,9,10,11,13,14,15,16,17}
        \filldraw[gray] (\x cm-1cm,1cm) circle [radius=2pt];

    \foreach \x in {1,2,3,4,5,7,8,9,10,11,13,14,15,16,17}
        \filldraw[gray] (\x cm-1cm,0cm) circle [radius=2pt];

    \foreach \x in {1.5,3.5,5.5,7.5, 9.5,12.5,14.5}
        \draw[dashed] (\x cm,-0.5cm) -- (\x cm,1.5cm);

%lines per round:
    \draw (16 cm,1 cm) node[anchor=west] {$p_{\frac{n}2-2k}$};
    \draw (16 cm,0 cm) node[anchor=west] {$p_{\frac{n}2-2k+1}$};

    \foreach \x in {8,9,10,12,13,14,15,16}
        \draw[very thick,->] (\x cm,1cm-1pt) -- (\x cm,0cm+1pt);

    \foreach \x in {0,2,6}
        \draw[very thick,->] (\x cm+1cm-1pt,1cm-1pt) -- (\x cm+1pt,0cm+1pt);

    \foreach \x in {0,2,6}
        \draw[very thick,->] (\x cm+1pt,1cm-1pt) --(\x cm+1cm-1pt,0cm+1pt);
\end{tikzpicture}
        \caption{Round $4k+1$}
    \end{subfigure}
    \begin{subfigure}{\textwidth}
        \centering
        \medskip
\begin{tikzpicture}[xscale=.75, yscale=1]
%    \draw (8,0.5) ellipse (10cm and 1.5cm);
    \draw[blue] (0,1) -- (16,1) .. controls (16.5,0.5) .. (16,0) -- (0,0) .. controls (-0.5,0.5) .. cycle;
    \draw (0 cm,1pt) node[anchor=north] {$s_{n}$};

    \foreach \x in {1,2,3,4,5}
       \draw (\x cm-1cm,1cm-1pt) node[anchor=south] {$s_{\x}$} ;
    \foreach \x in {1,2,3,4}
       \draw (\x cm,1pt) node[anchor=north] {$s_{n-\x}$} ;

    \draw (16 cm,1cm-1pt) node[anchor=south] {$s_{\frac{n}2}$};

    \foreach \x in {1,2,3,4}
       \draw (16cm - \x cm,1cm-1pt) node[anchor=south] {$s_{\frac{n}2-\x}$} ;
    \foreach \x in {1,2,3,4,5}
       \draw (17cm - \x cm,1pt) node[anchor=north] {$s_{\frac{n}2+\x}$} ;

    \draw (6 cm,1cm-1pt) node[anchor=south] {$s_{2k-1}$};
    \draw (7 cm,1cm-1pt) node[anchor=south] {$s_{2k}$};
    \draw (8 cm,1cm-1pt) node[anchor=south] {$s_{2k+1}$};
    \draw (9 cm,1cm-1pt) node[anchor=south] {$s_{2k+2}$};
    \draw (10 cm,1cm-1pt) node[anchor=south] {$s_{2k+3}$};
    \draw (8 cm,1pt) node[anchor=north] {$s_{n-2k}$};
    \draw (9.25 cm,1pt) node[anchor=north] {$s_{n-2k-1}$};
    \draw (6 cm,1.5cm-1pt) node[anchor=south] {$p_{n-1}$};
    \draw (7 cm,1.5cm-1pt) node[anchor=south] {$p_n$};
    \draw (8 cm,1.5cm-1pt) node[anchor=south] {$p_1$};
    \draw (9 cm,1.5cm-1pt) node[anchor=south] {$p_2$};
    \draw (10 cm,1.5cm-1pt) node[anchor=south] {$p_3$};
    \draw (8 cm,-0.5cm+1pt) node[anchor=north] {$p_{n-4k}$};
    \draw (9.25 cm,-0.5cm+1pt) node[anchor=north] {$p_{n-4k-1}$};

    \foreach \x in {1,2,3,4,5,7,8,9,10,11,13,14,15,16,17}
       \filldraw[gray] (\x cm-1cm,1cm) circle [radius=2pt];
    \foreach \x in {1,2,3,4,5,7,8,9,10,11,13,14,15,16,17}
       \filldraw[gray] (\x cm-1cm,0cm) circle [radius=2pt];
    \foreach \x in {1.5,3.5,5.5,7.5, 9.5,12.5,14.5}
       \draw[dashed] (\x cm,-0.5cm) -- (\x cm,1.5cm) ;

    %lines per round:
    \draw (16 cm,1 cm) node[anchor=west] {$p_{\frac{n}2-2k}$};
    \draw (16 cm,0 cm) node[anchor=west] {$p_{\frac{n}2-2k+1}$};

    \foreach \x in {2,3,4,6,7}
        \draw[very thick,->] (\x cm,0cm+1pt) -- (\x cm-2cm,1cm-1pt) ;
    \foreach \x in {0,1}
        \draw[red,very thick,->] (\x cm,0cm+1pt) -- (\x cm+6cm,1cm-1pt) ;
    \foreach \x in {8,10,13,15}
        \draw[very thick,->] (\x cm+1pt,0cm+1pt) -- (\x cm+1cm-1pt,1cm-1pt) ;
    \foreach \x in {8,10,13,15}
        \draw[very thick,->] (\x cm+1cm-1pt,0cm+1pt) -- (\x cm+1pt,1cm-1pt) ;
\end{tikzpicture}
        \caption{Round $4k+2$}
    \end{subfigure}
    \begin{subfigure}{\textwidth}
        \centering
        \medskip
\begin{tikzpicture}[xscale=.75, yscale=1]
%    \draw (8,0.5) ellipse (10cm and 1.5cm);
    \draw[blue] (0,1) -- (16,1) .. controls (16.5,0.5) .. (16,0) -- (0,0) .. controls (-0.5,0.5) .. cycle;
    \draw (0 cm,1pt) node[anchor=north] {$s_{n}$};

    \foreach \x in {1,2,3,4,5}
       \draw (\x cm-1cm,1cm-1pt) node[anchor=south] {$s_{\x}$} ;
    \foreach \x in {1,2,3,4}
       \draw (\x cm,1pt) node[anchor=north] {$s_{n-\x}$} ;

    \draw (16 cm,1cm-1pt) node[anchor=south] {$s_{\frac{n}2}$};

    \foreach \x in {1,2,3,4}
       \draw (16cm - \x cm,1cm-1pt) node[anchor=south] {$s_{\frac{n}2-\x}$} ;
    \foreach \x in {1,2,3,4,5}
       \draw (17cm - \x cm,1pt) node[anchor=north] {$s_{\frac{n}2+\x}$} ;

    \draw (6 cm,1cm-1pt) node[anchor=south] {$s_{2k-1}$};
    \draw (7 cm,1cm-1pt) node[anchor=south] {$s_{2k}$};
    \draw (8 cm,1cm-1pt) node[anchor=south] {$s_{2k+1}$};
    \draw (9 cm,1cm-1pt) node[anchor=south] {$s_{2k+2}$};
    \draw (10 cm,1cm-1pt) node[anchor=south] {$s_{2k+3}$};
    \draw (8 cm,1pt) node[anchor=north] {$s_{n-2k}$};
    \draw (9.25 cm,1pt) node[anchor=north] {$s_{n-2k-1}$};
    \draw (6 cm,1.5cm-1pt) node[anchor=south] {$p_{n-2}$};
    \draw (7 cm,1.5cm-1pt) node[anchor=south] {$p_{n-1}$};
    \draw (8 cm,1.5cm-1pt) node[anchor=south] {$p_n$};
    \draw (9 cm,1.5cm-1pt) node[anchor=south] {$p_1$};
    \draw (10 cm,1.5cm-1pt) node[anchor=south] {$p_2$};
    \draw (8 cm,-0.5cm+1pt) node[anchor=north] {$p_{n-4k-1}\quad$};
    \draw (9.25 cm,-0.5cm+1pt) node[anchor=north] {$p_{n-4k-2}$};

    \foreach \x in {1,2,3,4,5,7,8,9,10,11,13,14,15,16,17}
       \filldraw[gray] (\x cm-1cm,1cm) circle [radius=2pt];
    \foreach \x in {1,2,3,4,5,7,8,9,10,11,13,14,15,16,17}
       \filldraw[gray] (\x cm-1cm,0cm) circle [radius=2pt];
    \foreach \x in {0.5,2.5,4.5,6.5,8.5, 10.5,11.5,13.5,15.5}
       \draw[dashed] (\x cm,-0.5cm) -- (\x cm,1.5cm) ;

    %lines per round:
    \draw (0 cm,1 cm) node[anchor=east] {$p_{n-2k}$};
    \draw (0 cm,0 cm) node[anchor=east] {$p_{n-2k-1}$};
    \draw (16 cm,1 cm) node[anchor=west] {$p_{\frac{n}2-2k-1}$};
    \draw (16 cm,0 cm) node[anchor=west] {$p_{\frac{n}2-2k}$};
    \draw[very thick,->] (0 cm,0cm+1pt) -- (0 cm,1cm-1pt) ;

    \foreach \x in {9,10,12,13,14,15,16}
        \draw[very thick,->] (\x cm,1cm-1pt) -- (\x cm,0cm+1pt) ;
    \foreach \x in {1,3,7}
        \draw[very thick,->] (\x cm+1cm-1pt,1cm-1pt) -- (\x cm+1pt,0cm+1pt) ;
    \foreach \x in {1,3,7}
        \draw[very thick,->] (\x cm+1pt,1cm-1pt) -- (\x cm+1cm-1pt,0cm+1pt) ;
\end{tikzpicture}
        \caption{Round $4k+3$}
    \end{subfigure}
    \begin{subfigure}{\textwidth}
        \centering
        \begin{tikzpicture}[xscale=.75, yscale=1]
%    \draw (8,0.5) ellipse (10cm and 1.5cm);
    \draw[blue] (0,1) -- (16,1) .. controls (16.5,0.5) .. (16,0) -- (0,0) .. controls (-0.5,0.5) .. cycle;
    \draw (0 cm,1pt) node[anchor=north] {$s_{n}$};
    \foreach \x in {1,2,3,4,5}
       \draw (\x cm-1cm,1cm-1pt) node[anchor=south] {$s_{\x}$} ;
    \foreach \x in {1,2,3,4}
       \draw (\x cm,1pt) node[anchor=north] {$s_{n-\x}$} ;
    \draw (16 cm,1cm-1pt) node[anchor=south] {$s_{\frac{n}2}$};
    \foreach \x in {1,2,3,4}
       \draw (16cm - \x cm,1cm-1pt) node[anchor=south] {$s_{\frac{n}2-\x}$} ;
    \foreach \x in {1,2,3,4,5}
       \draw (17cm - \x cm,1pt) node[anchor=north] {$s_{\frac{n}2+\x}$} ;
    \draw (6 cm,1cm-1pt) node[anchor=south] {$s_{2k-1}$};
    \draw (7 cm,1cm-1pt) node[anchor=south] {$s_{2k}$};
    \draw (8 cm,1cm-1pt) node[anchor=south] {$s_{2k+1}$};
    \draw (9 cm,1cm-1pt) node[anchor=south] {$s_{2k+2}$};
    \draw (10 cm,1cm-1pt) node[anchor=south] {$s_{2k+3}$};
    \draw (8 cm,1pt) node[anchor=north] {$s_{n-2k}$};
    \draw (9.25 cm,1pt) node[anchor=north] {$s_{n-2k-1}$};
    \draw (6 cm,1.5cm-1pt) node[anchor=south] {$p_{n-2}$};
    \draw (7 cm,1.5cm-1pt) node[anchor=south] {$p_{n-1}$};
    \draw (8 cm,1.5cm-1pt) node[anchor=south] {$p_n$};
    \draw (9 cm,1.5cm-1pt) node[anchor=south] {$p_1$};
    \draw (10 cm,1.5cm-1pt) node[anchor=south] {$p_2$};
    \draw (8 cm,-0.5cm+1pt) node[anchor=north] {$p_{n-4k-1}\quad$};
    \draw (9.25 cm,-0.5cm+1pt) node[anchor=north] {$p_{n-4k-2}$};
    \foreach \x in {1,2,3,4,5,7,8,9,10,11,13,14,15,16,17}
       \filldraw[gray] (\x cm-1cm,1cm) circle [radius=2pt];
    \foreach \x in {1,2,3,4,5,7,8,9,10,11,13,14,15,16,17}
       \filldraw[gray] (\x cm-1cm,0cm) circle [radius=2pt];
    \foreach \x in {0.5,2.5,4.5,6.5,8.5, 10.5,11.5,13.5,15.5}
       \draw[dashed] (\x cm,-0.5cm) -- (\x cm,1.5cm) ;
    %lines per round:
    \draw (0 cm,1 cm) node[anchor=east] {$p_{n-2k}$};
    \draw (0 cm,0 cm) node[anchor=east] {$p_{n-2k-1}$};
    \draw (16 cm,1 cm) node[anchor=west] {$p_{\frac{n}2-2k-1}$};
    \draw (16 cm,0 cm) node[anchor=west] {$p_{\frac{n}2-2k}$};
    \foreach \x in {2,3,4,7,8}
        \draw[very thick,->] (\x cm-1pt,0cm+1pt) -- (\x cm-2cm+1pt,1cm-1pt) ;
    \foreach \x in {9,12,14}
        \draw[very thick,->] (\x cm+1pt,0cm+1pt) -- (\x cm+1cm-1pt,1cm-1pt) ;
    \foreach \x in {9,12,14}
        \draw[very thick,->] (\x cm+1cm-1pt,0cm+1pt) -- (\x cm+1pt,1cm-1pt) ;
    \draw[very thick,->,red] (1cm-1pt,0) -- (0+1pt,0);
    \draw[very thick,<-,red] (7cm+1pt,1cm-1pt) .. controls (10,-2) and (16,-2) .. (16,0cm-1pt);
    \draw[very thick,<-,red] (8,1cm+1pt) .. controls (8,3) and (16,3) .. (16,1cm+1pt);
\end{tikzpicture}
        \caption{Round $4k+4$}
    \end{subfigure}
    \caption{The ranking-fair schedule generated by a modified circle method.}\label{fig:4k:circle}
\end{figure}
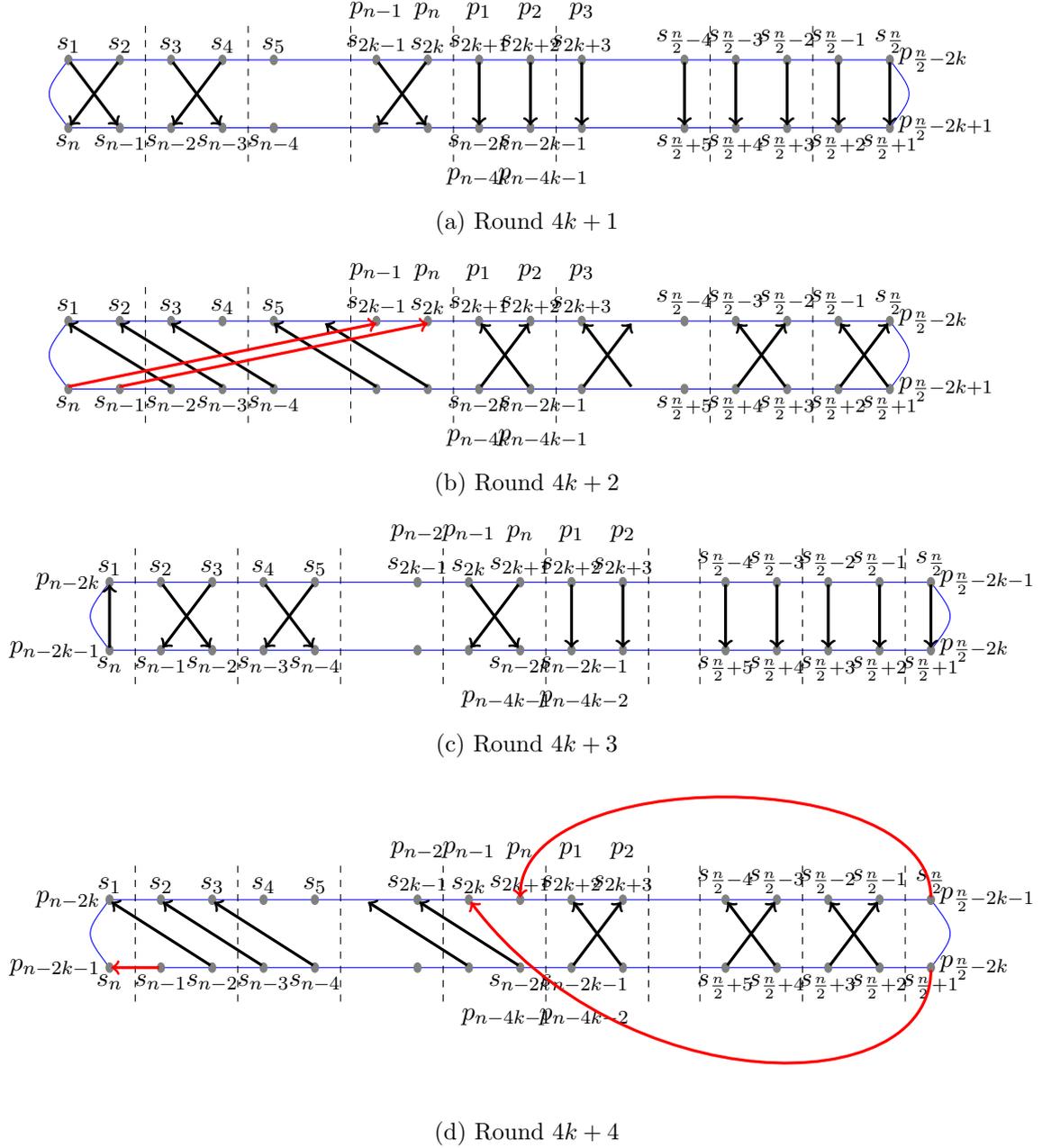

\subsection{The Case $n=4k+2$}
\label{sec:specialcasen}
The construction to generate a ranking-fair schedule in the previous section does not extend to the case where $n = 4k+2$.
In this section, we prove the non-existence of a single-break, ranking-fair schedule when~$n=6$.
In addition, we show computationally that also for $n \in \set{10, 14}$ no single-break, ranking-fair schedules exist.
Finally, we prove that such schedules exist for all other values of $n \le 98$.

\begin{lemma}
    There is no single-break, ranking-fair schedule for~$n=6$.
\end{lemma}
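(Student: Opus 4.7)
The plan is to reduce the claim to a finite case analysis over the break rounds of the six teams. First, I invoke Lemma~\ref{lem:rf-parity}, which fixes the venue of every match in a ranking-fair schedule for $n=6$. A direct tabulation shows that teams~$1, 3, 5$ each host three games (their home opponents being $\{2,4,6\}$, $\{1,4,6\}$, $\{1,3,6\}$ respectively), while teams~$2, 4, 6$ each host two. In a single-break schedule teams~$1, 3, 5$ therefore carry HAPs with three~$H$'s---the five cyclic rotations of $HAHAH$---and teams~$2, 4, 6$ carry the complementary five cyclic rotations of $AHAHA$. Indexing the three-$H$ HAP by its break round $r \in \{1, \dots, 5\}$ as $P_r$, one verifies directly that the $H$-positions of $P_r$ are $\{r-1, r, r+2\} \pmod 5$ and its $A$-positions are $\{r+1, r+3\} \pmod 5$.

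By De Werra's complementarity result~\cite{dew1981}, the six HAPs of the schedule form three complementary pairs that share their break rounds. Since the three-$H$ patterns are assigned to the odd teams and the two-$H$ patterns to the even teams, each odd team shares its break round with exactly one even team. Let $r_i \in \{1, \dots, 5\}$ denote the break round of team~$i$. The round $r_{ij}$ of every match is then forced to lie in the prescribed $H$- or $A$-positions of both teams~$i$ and~$j$, as dictated by the ranking-fair venue of the match.

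Fixing $r_1 = 1$ by a cyclic rotation of rounds, the $A$-positions of team~$1$ are $\{2, 4\}$, which must equal $\{r_{13}, r_{15}\}$. I case-split on the two orderings of this pair. For each ordering, I enumerate the at-most-three values of $r_3$ satisfying $r_{13} \in \{r_3 - 1, r_3, r_3 + 2\} \pmod 5$, and analogously for $r_5$ using $r_{15}$; in each case the rounds $r_{23}, r_{34}, r_{35}, r_{36}, r_{25}, r_{45}, r_{56}$ are forced by the $H$- and $A$-positions of teams~$3$ and~$5$. The sets $\{r_{23}, r_{25}\}$, $\{r_{24}, r_{45}\}$, $\{r_{26}, r_{46}\}$ must equal $\{r_2+1, r_2+3\}$, $\{r_4+1, r_4+3\}$, $\{r_6+1, r_6+3\} \pmod 5$ respectively, which pins down $r_2, r_4, r_6$ (or immediately eliminates the case). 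In each of the at most $2 \cdot 3 \cdot 3 = 18$ resulting configurations the derived round assignments yield a contradiction---typically some team is scheduled against two opponents in the same round, or the required $H$-positions of an even team fail to include a round forced by the odd teams. The main obstacle is bookkeeping; a compact table listing, for each $(r_{13}, r_3, r_5)$, the forced round values and the source of the contradiction would suffice to complete the proof.
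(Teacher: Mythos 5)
Your setup is sound: the home-count tabulation for the odd and even teams is correct, the characterization of the five three-$H$ single-break patterns by their (necessarily home) break round is correct, and the normalization $r_1=1$ via cyclic rotation is legitimate. The approach is also genuinely different from the paper's, which first invokes the result of Lambers et al.~\cite{lamgoospi2023} that $221$ is the \emph{only} feasible single-break D-sequence for $n=6$, thereby fixing the break rounds to $\{1,2,4\}$ up to symmetry and reducing everything to three short cases (which break round team~$1$ receives); your argument would avoid that external fact at the price of enumerating break-round assignments directly.

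However, the proof is not complete: the entire substance of the lemma lies in the final case analysis, and you have only described it and asserted its outcome (``in each \ldots configurations the derived round assignments yield a contradiction''), explicitly deferring the verification to a table you do not provide. Until that table is exhibited and checked, nothing has been proved. Moreover, the claim that within each of the $18$ configurations the rounds $r_{34}, r_{36}, r_{23}, r_{25}$, etc.\ are ``forced'' is overstated: team~$3$'s two home rounds other than $r_{13}$ must host the matches against~$4$ and~$6$, but in which order is not determined by teams~$1,3,5$ alone --- it depends on the break rounds of the even teams, which you are simultaneously solving for. The same applies to the pair of away rounds hosting the matches against~$2$ and~$5$ (only the match $3$--$5$ is pinned down by intersecting the constraints of two teams whose break rounds are already chosen). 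So the honest enumeration branches further than $2\cdot 3\cdot 3$, and the bookkeeping you defer is both larger and more delicate than stated. To repair this you must either carry out the full enumeration explicitly, or adopt the paper's shortcut of restricting to the unique feasible D-sequence first.
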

\begin{proof}
    For~$n=6$, the canonical pattern set with D-sequence~$221$ is the only HAP set that is single-break feasible (Lambers et al.~\cite{lamgoospi2023}).
    We will try to construct a schedule using this pattern set that is ranking-fair, where we will eventually reach a contradiction.

    Suppose that we fix the breaks such that the breaks occur in rounds~$1$, $2$, and~$4$.
    For brevity, we call the odd teams~$a, b, c = t(H^1), t(H^2), t(H^4)$ and the even teams~$x, y, z = t(A^1), t(A^2), t(A^4)$.
    We write $\match s t$ to denote the match between teams~$s$ and~$t$, where team~$t$ plays away.

    Now, the match between~$a$ and~$y$ is fixed at round~$1$, as in all other rounds the HAPs are equal.
    Analogously, the match between~$tb$ and~$x$ is fixed at round~$1$, which must fix the match between the remaining teams~$c$ and~$z$ also in round~$1$.
    By nature of the patterns, $a$, $b$, and $c$ play at home in round~$1$.
    Since the matches are between teams of different parity, the team playing at home is stronger than the team playing away (Lemma~\ref{lem:rf-parity}).
    Hence, $a < y$, $b < x$, and $c < z$.

    Now, we consider the following cases for which pattern gets assigned to the strongest team.
    See also Figure~\ref{fig:proofn6-not-rf}, visualizing the fixed matches in the cases below.
    \begin{itemize}
        \item Suppose~$a = 1$.
        Since team~$1$ plays away against all odd opponents, this fixes the match~$\match c a $ in round~$4$, which in turn fixes~$\match b a$ in round~$2$.
        This fixes~$\match c b$ in round~$3$, yielding~$b < c$.
        Now the assignment of the teams to patterns is fully determined as~$a < y < b < x < c < z$.
        Team~$y$ plays at home in round~$3$, against either~$x$ or~$z$.
        But since team~$y$ is stronger than both possible opponents, it must play away against both of them, making the schedule infeasible.

        \item Suppose~$b = 1$.
        The argument is analogous to the previous case, with the following fixings of matches: $\match c b$ in round~$3$, $\match a b$ in round~$5$, $\match c a$ in round~$4$.
        Then, $b < x < a < y < c < z$, but in round~$4$ team~$x$ plays at home against either~$y$ or~$z$.
        Since team~$x$ is stronger than both even opponents, it must play away against both teams.

        \item Suppose~$c = 1$.
        This fixes matches $\match b c$ in round~$2$, $\match a c$ in round~$5$, $\match c x$ in round~$3$, and $\match c y$ in round~$4$.
        Now, $x$ plays against $a$ in either round $2$ or $4$, with~$x$ playing at home in both.
        Thus, $x < a$.
        This fully determines the assignment of teams to the patterns, as~$c < z < b < x < a < y$.
        But also $y$ plays~$b$ at home, in either round $3$ or $5$, yielding~$y < b$, a contradiction.\qedhere
    \end{itemize}
\end{proof}

\begin{figure}
    \centering
    \setlength\tabcolsep{1ex}
    \begin{subfigure}{.3\textwidth}
        \centering
        \footnotesize
        \begin{tabular}{c|ccccc}
            \toprule
                & 1 & 2 & 3 & 4 & 5 \\
            \midrule
            $a$ & \ccell{yellow!50}{H} & \ccell{basegreen!50}A & H & \ccell{basegreen!50}A & H \\
            $b$ & \ccell{baseorange!50}{H} & \ccell{basegreen!50}H & \ccell{baseblue!50}A & H & A \\
            $c$ & \ccell{tuescarlet!50}{H} & A & \ccell{baseblue!50}H & \ccell{basegreen!50}H & A \\
            $x$ & \ccell{baseorange!50}{A} & H & A & H & A \\
            $y$ & \ccell{yellow!50}{A} & A & H & A & H \\
            $z$ & \ccell{tuescarlet!50}{A} & H & A & A & H \\
            \bottomrule
        \end{tabular}
        \caption{The case~$a=1$.}
    \end{subfigure}
    \begin{subfigure}{.3\textwidth}
        \centering
        \footnotesize
        \begin{tabular}{c|ccccc}
            \toprule
            & 1 & 2 & 3 & 4 & 5 \\
            \midrule
            $a$ & \ccell{yellow!50}H & A & H & \ccell{baseblue!50}A & \ccell{basegreen!50}H \\
            $b$ & \ccell{baseorange!50}H & H & \ccell{basegreen!50}A & H & \ccell{basegreen!50}A \\
            $c$ & \ccell{tuescarlet!50}H & A & \ccell{basegreen!50}H & \ccell{baseblue!50}H & A \\
            $x$ & \ccell{baseorange!50}A & H & A & H & A \\
            $y$ & \ccell{yellow!50}A & A & H & A & H \\
            $z$ & \ccell{tuescarlet!50}A & H & A & A & H \\
            \bottomrule
        \end{tabular}
        \caption{The case~$b=1$.}
    \end{subfigure}
    \begin{subfigure}{.3\textwidth}
        \centering
        \footnotesize
        \begin{tabular}{c|ccccc}
            \toprule
            & 1 & 2 & 3 & 4 & 5 \\
            \midrule
            $a$ & \ccell{yellow!50}H & A & H & A & \ccell{basegreen!50}H \\
            $b$ & \ccell{baseorange!50}H & \ccell{basegreen!50}H & A & H & A \\
            $c$ & \ccell{tuescarlet!50}H & \ccell{basegreen!50}A & \ccell{basegreen!50}H & \ccell{basegreen!50}H & \ccell{basegreen!50}A \\
            $x$ & \ccell{baseorange!50}A & H & \ccell{basegreen!50}A & H & A \\
            $y$ & \ccell{yellow!50}A & A & H & \ccell{basegreen!50}A & H \\
            $z$ & \ccell{tuescarlet!50}A & H & A & A & H \\
            \bottomrule
        \end{tabular}
        \caption{The case~$c=1$.}
    \end{subfigure}
    \caption{The fixed matches in a single-break, ranking-fair schedule for~$n=6$, depending on which pattern is assigned to team~$1$.}\label{fig:proofn6-not-rf}
\end{figure}

To settle the existence of ranking-fair schedules when the number of teams is at least $10$, we define an integer programming (IP) model that, given a HAP set~$\mathcal H$, generates a ranking-fair schedule corresponding to this HAP set, if it exists.
Denote by~$\mathcal H_H$ and $\mathcal H_A$ the HAPs that have a home-break and an away-break, respectively.
Note that odd teams need to be assigned a pattern from~$\mathcal H_H$, while even teams are assigned a HAP from~$\mathcal H_A$.

For a HAP~$p \in \mathcal H$, let~$t(p)$ denote the team that is assigned to play with pattern~$p$.
We introduce variables~$x_{\set{p,q},r} \in \set{0, 1}$ for each unordered pair of patterns~$\set{p, q} \in \binom{\mathcal H}{2}$ and every round~$r \in R$, which denote that $t(p)$ plays against $t(q)$ in round~$r$.
Note that~$p_r$ and $q_r$ dictate whether $t(p)$ or $t(q)$ plays at home in round~$r$.
We define for, any $p, q \in \mathcal H$, the set~$R_{p,q} \subseteq R$ as the set of rounds where $t(p)$ can play against $t(q)$ where $t(p) < t(q)$.
Note that, in a ranking-fair schedule, when~$t(p)$ is stronger than~$t(q)$ and both~$p$ and~$q$ have a home-break or both have an away-break, $t(p)$ has to play at home against $t(q)$.
Similarly, when only one of the patterns $p$ and $q$ has a home-break (and the other an away-break), $t(p)$ plays away against $t(q)$.
Thus, we can define
\begin{equation}
    R_{p,q} = \begin{cases}
                  \set{r \in R : p_r = H, q_r = A} & \text{if $p,q \in \mathcal H_H$ or $p,q \in \mathcal H_A$,} \\
                  \set{r \in R : p_r = A, q_r = H} & \text{otherwise.}
    \end{cases}
\end{equation}
We also introduce variables~$b_{p, q} \in \set{0, 1}$ for each~$p, q \in \mathcal H$, $p \neq q$, that indicate whether~$t(p)$ is stronger than~$t(q)$, i.e., $b_{p,q} = 1$ if and only if~$t(p) < t(q)$.
Then, the model for finding a ranking-fair schedule corresponding to the given HAP set~$\mathcal H$ reads as follows.
\begin{subequations}
    \label{eq:ip}
    \begin{align}
        &&\text{minimize}\quad && 0 &&& \\
        &&\text{subject to}\quad && \sum_{q \in \mathcal H \setminus\set{p}} x_{\set{p, q}, r} &= 1 &&\qquad \forall p \in \mathcal H, r \in R, && \label{cons:ip:every-round-once} \\
        &&&& \sum_{r \in R_{p,q}} x_{\set{p,q}, r} &= b_{p, q} &&\qquad \forall p,q \in \mathcal H, p \neq q, && \label{cons:ip:link-b-x}\\
        &&&& b_{p,q} + b_{q,r} - b_{p, r} &\le 1 &&\qquad \forall p,q,r \in \mathcal H, p \neq q, p \neq r, q \neq r, && \label{cons:ip:transitivity-b}\\
        &&&& b_{p,q} + b_{q,p} &= 1 &&\qquad \forall p,q \in \mathcal H, p \neq q, && \label{cons:ip:choose-b}\\
        &&&& x_{\set{p, q},r} &\in \set{0, 1} &&\qquad\forall \set{p, q} \in \binom{\mathcal H}{2}, r \in R,\\
        &&&& b_{p,q} &\in \set{0, 1} &&\qquad \forall p, q \in \mathcal H, p \neq q. &&
    \end{align}
\end{subequations}
Constraints~\eqref{cons:ip:every-round-once} ensure that every team plays exactly once in every round.
Constraints~\eqref{cons:ip:link-b-x} ensure that the match between the teams assigned to patterns~$p$ and~$q$ occurs at a round in which the patterns have a complementary venue, while also enforcing that the schedule is ranking-fair, through the definition of~$R_{p,q}$.
Lastly, constraints~\eqref{cons:ip:transitivity-b} and \eqref{cons:ip:choose-b} impose a linear order on the teams assigned to the patterns, from which one can reconstruct the rank of the team assigned to each pattern.
In a feasible solution of the integer program, the rank of the team assigned to pattern~$p$ can be determined as~$t(p) = 1 + \sum_{q \in \mathcal H \setminus\set{p}} b_{q, p}$.
The objective function is constant, as finding a ranking-fair schedule corresponding to HAP set $\mathcal H$ is a feasibility problem.

For $n=6$, $n=10$, and $n=14$, the only feasible D-sequences, obtained from \cite{lamgoospi2023}, yield no ranking-fair schedule.
Hence, no ranking-fair schedules exist in these cases.
However, for $n = 18, 22, 26, \dots, 98$, the IP model finds single-break, ranking-fair schedules.
The particular D-sequence for this schedule is given by $2212 (31)^i 2 (13)^j$, where $i = \left\lceil \frac 1 4 (\frac n 2 - 5) \right\rceil$ and $j = \left\lfloor \frac 1 4 (\frac n 2 - 5) \right\rfloor$, denoting repetitions of the sub-sequences $31$ and $13$, respectively.

\section{The Canonical Pattern Set}
\label{sec:cps}
In practice, the schedule of SRR tournaments is based on a particular HAP set, namely the one with D-sequence $22\dots21$, which is called the canonical pattern set (CPS).
Because schedules corresponding to this HAP set are easy to construct, the CPS is widely used in many competition schedules.
In this section, we ask whether schedules using the CPS can be ranking-fair.

\begin{theorem}
    The canonical pattern set allows a ranking-fair schedule for~$n=8$.
\end{theorem}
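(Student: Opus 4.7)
The plan is to give a constructive proof by exhibiting an explicit ranking-fair schedule whose HAP set is the CPS. For $n=8$ the CPS has D-sequence $2221$, so the break rounds can be fixed as $\{1,3,5,7\}$; this yields eight single-break patterns, four home-break (which I will denote $h_1,h_3,h_5,h_7$ by break round) in $\mathcal H_H$ and four complementary away-break patterns $a_1,a_3,a_5,a_7$ in $\mathcal H_A$. By the parity counting in Table~\ref{tab:rankingHAPset} together with Lemma~\ref{lem:rf-parity}, odd teams must be assigned patterns from $\mathcal H_H$ and even teams patterns from $\mathcal H_A$ (odd teams need $4$ home games and even teams $3$, matching $|h_r|_H=4$ and $|a_r|_H=3$). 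The problem thus reduces to choosing two bijections $\sigma:\{1,3,5,7\}\to\{1,3,5,7\}$ and $\tau:\{2,4,6,8\}\to\{1,3,5,7\}$ specifying the break rounds of the assigned patterns.

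Once $\sigma$ and $\tau$ are fixed, Lemma~\ref{lem:rf-parity} turns each of the $\binom{8}{2}=28$ matches into a simple intersection problem: the game between two teams must be played in one of the rounds where the two assigned HAPs are complementary in the direction the lemma requires. I would then pick a favourable $(\sigma,\tau)$, compute these per-match candidate round sets, use the singletons as anchors, propagate the forced choices, and assemble a $4\times 7$ table of matches. Verification is then a direct enumeration: check that the resulting schedule is a valid SRR, that each team's realised HAP coincides with its assigned single-break pattern (so the HAP set is indeed the CPS), and that every one of the $28$ matches satisfies Lemma~\ref{lem:rf-parity} and hence the schedule is ranking-fair.

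The main obstacle is the choice of $\sigma$ and $\tau$: most natural assignments are infeasible. The identity-style choice $t(h_r)=r$ for odd $r$ together with $t(a_r)=r+1$ already fails, because the home rounds of team~$2$'s pattern $a_1$ are $\{2,4,6\}$ while the away rounds of team~$7$'s pattern $h_7$ are $\{1,3,5\}$; their intersection is empty, so teams~$2$ and~$7$ cannot meet with team~$2$ at home as Lemma~\ref{lem:rf-parity} demands. A working assignment can be located either by running the integer programme~\eqref{eq:ip} of Section~\ref{sec:results} on the CPS HAP set or by a short case analysis; for example, sending $(1,3,5,7)$ to $(h_3,h_7,h_1,h_5)$ and $(2,4,6,8)$ to $(a_5,a_1,a_7,a_3)$ yields candidate sets that are tight but jointly satisfiable, producing an explicit schedule that can be exhibited to close the proof.
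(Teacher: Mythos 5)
Your approach is the same as the paper's: a constructive proof that exhibits an explicit CPS schedule realizing the ranking HAPs of Table~\ref{tab:rankingHAPset}, with the reduction to assigning home-break patterns to odd teams and away-break patterns to even teams being exactly the setup the paper uses for its IP model. In fact, your proposed assignment --- teams $1,3,5,7$ to the home-break patterns breaking in rounds $3,7,1,5$ and teams $2,4,6,8$ to the away-break patterns breaking in rounds $5,1,7,3$ --- is precisely the assignment realized by the schedule printed in Table~\ref{tab:cps:n8:rf} (there, teams $4$ and $5$ break in round $1$, teams $1$ and $8$ in round $3$, teams $2$ and $7$ in round $5$, and teams $3$ and $6$ in round $7$), and your check that the identity-style assignment fails (teams $2$ and $7$ would have no round in which $2$ is home and $7$ is away, as Lemma~\ref{lem:rf-parity} requires) is correct. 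The one genuine shortfall is that you stop before finishing: you assert that the candidate round sets for your chosen assignment are ``tight but jointly satisfiable'' and that a schedule ``can be exhibited,'' but you never carry out the propagation or write down the resulting $4\times 7$ match table. Since the theorem is a pure existence claim proved by construction, that explicit table (together with the verification that each team's realised HAP is single-break with the stated break round and that every match respects Lemma~\ref{lem:rf-parity}) \emph{is} the proof; the paper supplies it in Table~\ref{tab:cps:n8:rf}, and you should do the same to close the argument.
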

\begin{proof}
    A corresponding schedule is given in Table~\ref{tab:cps:n8:rf}.
    It is straightforward to verify that teams $4$ and $5$, $1$ and $8$, $2$ and $7$, $3$ and $6$ have breaks in rounds $1$, $3$, $5$, and $7$, respectively, following the canonical pattern set.
    Furthermore, the schedule follows the ranking HAPs as in Table~\ref{tab:rankingHAPset}.
\end{proof}

\begin{table}
    \centering
    \caption{A ranking-fair schedule for $n=8$ teams using the canonical pattern set. The team playing at home is displayed first.}\label{tab:cps:n8:rf}
    \begin{tabular}{r|cccc}
        \toprule
        Round & \multicolumn{4}{l}{Matches} \\
        \midrule
        $1$ & $5$--$1$ & $2$--$3$ & $8$--$4$ & $6$--$7$ \\
        $2$ & $1$--$6$ & $4$--$2$ & $3$--$8$ & $7$--$5$ \\
        $3$ & $1$--$8$ & $2$--$7$ & $5$--$3$ & $6$--$4$ \\
        $4$ & $7$--$1$ & $8$--$2$ & $3$--$6$ & $4$--$5$ \\
        $5$ & $1$--$4$ & $6$--$2$ & $7$--$3$ & $5$--$8$ \\
        $6$ & $3$--$1$ & $2$--$5$ & $4$--$7$ & $8$--$6$ \\
        $7$ & $1$--$2$ & $3$--$4$ & $5$--$6$ & $7$--$8$ \\ \bottomrule
    \end{tabular}
\end{table}

For~$n = 10$, the canonical pattern set does not allow a ranking-fair schedule, as verified by the IP model in Section~\ref{sec:specialcasen}.
In fact, for any~$n \ge 12$, the CPS does not allow a ranking-fair schedule, as shown in the following result.

\begin{theorem} \label{thm:cps}
    The canonical pattern set does not allow a ranking-fair schedule for~$n \ge 12$.
\end{theorem}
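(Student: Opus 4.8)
The plan is to pin down the pattern set completely, reduce the problem to assigning ranks to a rigid set of patterns, and then show that ranking fairness over-determines that assignment once $n\ge 12$.

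\emph{Step 1 (rigidity of the pattern set).} First I would observe that the home/away counts are forced: reading off the rows of Table~\ref{tab:rankingHAPset}, each odd team has $n/2$ home games and each even team has $n/2-1$. Since a single-break pattern with a home break has $n/2$ home games while one with an away break has $n/2-1$, and the total number of home games is invariant under the reordering between a HAP and a ranking HAP, every odd team must be assigned a home-break pattern and every even team an away-break pattern. As the CPS is complementary with D-sequence $22\cdots21$, up to rotation and reflection its $n/2$ break rounds are $1,3,\dots,n-1$, each hosting exactly one home-break pattern $A_r$ together with its complement, the away-break pattern $X_r$. Hence the pattern set is completely rigid; the only remaining freedom is the bijection sending the odd ranks to the patterns $A_r$ and the even ranks to the patterns $X_r$, together with the choice of the underlying $1$-factorization.

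\emph{Step 2 (when two patterns may meet).} A short computation shows that $A_r$ is home in round $\rho$ exactly when $\rho$ is odd with $\rho\ge r$, or $\rho$ is even with $\rho<r$ (and $X_r$ is home in the complementary rounds). Consequently two \emph{same-type} patterns with adjacent break rounds, $A_r$ and $A_{r+2}$ (and likewise $X_r$ and $X_{r+2}$), have identical venues in every round except $r$ and $r+1$, so the two teams can only meet in one of those two rounds. Since such a pair has equal parity, Lemma~\ref{lem:rf-parity} forces the stronger team to play away; therefore they meet in round $r$ precisely when $A_r$ is the weaker of the two, and in round $r+1$ precisely when $A_{r+2}$ is the weaker. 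Thus every adjacent same-type pair contributes a strict strength inequality whose orientation is determined by which of the two admissible rounds hosts their game.

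\emph{Step 3 (fixing the orientations and deriving a cycle).} Next I would determine, round by round, the exact set of home teams; for round $1$ this set is $\{A_1,X_3,X_5,\dots,X_{n-1}\}$, with the complementary teams playing away. Combining this with the fact that each team plays exactly once per round, and with the mixed-parity games (where Lemma~\ref{lem:rf-parity} forces the stronger team to be \emph{home}, yielding inequalities of the form $x_s<a_r$), I would pin the orientations along the chain $A_1,A_3,\dots,A_{n-1}$ and along the chain $X_1,X_3,\dots,X_{n-1}$, and use the mixed games to couple the two chains. The goal is to exhibit a closed sequence of these forced inequalities, i.e.\ a strict cycle $t_1<t_2<\cdots<t_m<t_1$, which is impossible. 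I expect the threshold to enter exactly here: the coupled chains are long enough to close into a contradictory cycle once $n\ge 12$, whereas for $n=8$ they are too short to do so, consistent with the explicit ranking-fair schedule of Table~\ref{tab:cps:n8:rf}; the two remaining tight values $n\in\{10,14\}$ fall just below the threshold and are exactly the cases the paper settles computationally.

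The main obstacle is Step 3: the per-pair analysis of Step 2 only yields a \emph{choice} of meeting round, not a forced inequality, so turning these choices into a rigid oriented structure requires genuinely global reasoning with the $1$-factorization — exploiting the per-round home/away capacities and the one-game-per-round constraint simultaneously for all teams. Isolating the minimal contradictory cycle, and proving that it is unavoidable precisely when $n\ge 12$ (while gracefully accounting for the $n=8$ exception), is the delicate combinatorial heart of the argument.
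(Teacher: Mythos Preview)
Your Steps~1 and~2 are correct, but Step~3 is not a proof: you explicitly leave the existence of the contradictory cycle as an open ``main obstacle''. The paper closes this gap with one idea you are missing, and that idea makes the argument short and local rather than global.

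The missing idea is to \emph{anchor at team~$1$}. Team~$1$ is odd, so it sits on some home-break pattern with break round~$r$; and because it is the strongest team, Lemma~\ref{lem:rf-parity} forces it to play \emph{away} against every other odd team. In your Step~2 language, this removes the ``choice'' for every pair that involves team~$1$: the meeting round, and hence the inequality direction, is fixed. Now take just the four home-break patterns with break rounds $r,r+2,r+4,r+6$, and call their teams $1,x,y,z$. These four patterns coincide outside the six rounds $r,\dots,r+5$, so all six pairwise matches among them lie in that window. The three matches involving team~$1$ (team~$1$ away) are forced, in this order: $1$--$x$ in round $r+1$, hence $1$--$y$ in $r+3$, hence $1$--$z$ in $r+5$. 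With those three rounds occupied, the remaining three matches are also forced: $x$--$y$ in $r+2$, $y$--$z$ in $r+4$, and $x$--$z$ in $r+3$. Reading off who is away in each gives $y<x$, $z<y$, and $x<z$, which is exactly the strict cycle you were hoping to produce.

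So four patterns and six rounds suffice; no coupling of odd and even chains, no mixed-parity games, no global reasoning about the $1$-factorization is needed. The threshold $n\ge 12$ enters only to guarantee that from team~$1$'s break round one can find three further break rounds at spacing~$2$ on at least one side: the single gap of~$1$ in the D-sequence $22\cdots 21$ cannot block both directions once $n/2\ge 6$.

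A small correction to your last paragraph: $n=14$ is not ``just below the threshold'' and is not handled computationally for the CPS---it is covered by this theorem. You are conflating this section with Section~\ref{sec:specialcasen}, where $n\in\{6,10,14\}$ are the values admitting no single-break ranking-fair schedule for \emph{any} D-sequence.
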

\begin{proof}
    Suppose for the sake of contradiction that there exists a schedule using the canonical pattern set that is ranking-fair.
    Let the breaks of the schedule occur in rounds~$1, 2, 4, 6, \dots, n - 2$.
    The strongest team, team~$1$, is assigned a HAP from the canonical pattern set;
    say that the break for this team occurs in round~$r$.
    It must be the case that there are also breaks in either rounds~$r+2, r+4, r+6$, or~$r-2, r-4, r-6$.
    Without loss of generality, we assume the former, as otherwise we can modify the schedule such that the rounds are played in reverse order.
    Consider the HAPs that have home breaks in rounds~$r, r+2, r+4, r+6$.
    Because these patterns have one more home match than away matches, they are assigned to odd teams.
    We call these teams~$1$, $x$, $y$, and $z$, respectively.
    Note that outside of the rounds~$r$ up to round~$r+5$, these patterns are equal.
    Therefore, matches between the teams assigned to these patterns must occur in one of the six rounds in which the patterns are different.
    See also Table~\ref{tab:cpsproof}.

    Note that in a ranking-fair schedule, team~$1$ plays away against all odd opponents.
    Therefore, the only round in which the match between team~$1$ and~$x$ can occur, is in round~$r+1$.
    This forces the match between $1$ and $y$ in round~$r+3$, and consequently the match between $1$ and $z$ in round $r+5$.
    Similarly, the match between $y$ and $z$ must occur in round~$r+4$ and between $x$ and $y$ in $r+2$.
    Because of ranking fairness and since $z$ plays away against $y$ and $y$ away against $x$, we now know that~$z < y$ and $y < x$.
    The only match remaining is between $x$ and $z$, which can only be scheduled in round $r+3$, with $x$ playing away.
    Therefore, $z > x$, which is a contradiction with~$z < y < x$.
    This concludes the proof.
\end{proof}

\begin{table}
    \def\home{H}
    \def\away{A}
    \centering
    \caption{A restricted view of a CPS schedule.}\label{tab:cpsproof}
    \begin{tabular}{l|cccccccc}
        \toprule
        Team & $r-1$ & $r$ & $r+1$ & $r+2$ & $r+3$ & $r+4$ & $r+5$ & $r+6$ \\ \midrule
        $1$ & {\home} & {\home} & {\away} & \home & {\away} & \home & {\away} & \home \\
        $x$ & \home & \away & \home & {\home} & {\away} & \home & \away & \home \\
        $y$ & \home & \away & \home & {\away} & {{\home}} & {{\home}} & \away & \home \\
        $z$ & \home & \away & \home & \away & {\home} & {\away} & {{\home}} & {\home} \\ \bottomrule
    \end{tabular}
\end{table}

This argument is also applicable for other HAP sets where the corresponding D-sequence contains the sub-sequence~$2222$.

\begin{lemma}
    A ranking-fair D-sequence cannot contain the sub-sequence~$2222$.
\end{lemma}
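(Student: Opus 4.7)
The plan is to adapt the cyclic-ranking argument used in the proof of Theorem~\ref{thm:cps}. That argument rests on two ingredients: (i) the existence of four home-break HAPs whose breaks form an arithmetic progression $r, r+2, r+4, r+6$, and (ii) Lemma~\ref{lem:rf-parity}, which guarantees that the strongest of the four teams assigned to those HAPs must play away against the other three. Since feasible single-break HAP sets are complementary, a sub-sequence $2222$ in the D-sequence produces \emph{five} home-break HAPs $q_1, \dots, q_5$ with breaks at the five consecutive rounds $s, s+2, s+4, s+6, s+8$. The assigned teams $T_i = t(q_i)$ are all odd; let $T^*$ be the strongest of them, so that by Lemma~\ref{lem:rf-parity} $T^*$ plays away against each of the other four.

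By the reflection symmetry of D-sequences (reflection sends position $i$ in the window to position $6-i$, preserves single-break and home-break status, and preserves ranking-fairness), I may assume that $T^* \in \{q_1, q_2, q_3\}$. In the first two cases the argument of Theorem~\ref{thm:cps} applies \emph{verbatim} to the four-pattern window $\{q_1, q_2, q_3, q_4\}$ (respectively $\{q_2, q_3, q_4, q_5\}$), with $T^*$ taking the role of team~$1$ as the leftmost and strongest of the four; the same cyclic comparison yields the desired contradiction.

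The main obstacle is the remaining case $T^* = q_3$, where $T^*$ sits in the middle of the window and the four-pattern argument no longer applies directly. Here I would redo the forced-match analysis using all five patterns. Writing out the HAPs of $q_1, \dots, q_5$ across rounds $s-1$ through $s+8$ and requiring that $T^*$ play away against each of $q_1, q_2, q_4, q_5$, one checks that the match $q_3$ vs $q_2$ is forced into round $s+2$, the match $q_3$ vs $q_4$ into round $s+5$, and hence $q_3$ vs $q_1$ into round $s$ and $q_3$ vs $q_5$ into round $s+7$. These assignments in turn force $q_1$ vs $q_2$ into round $s+1$ and $q_4$ vs $q_5$ into round $s+6$, so Lemma~\ref{lem:rf-parity} yields $T(q_1) < T(q_2)$ and $T(q_5) < T(q_4)$. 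The four remaining matches among $\{q_1, q_2, q_4, q_5\}$ must then be placed in rounds $s+2, s+3, s+4, s+5$, and the contradiction is obtained from a case split on the round chosen for $q_2$ vs $q_4$: if it is placed in $s+3$, we derive $T(q_1) < T(q_2) < T(q_4)$, but both remaining admissible rounds for $q_1$ vs $q_4$ (namely $s+2$ and $s+4$) force $T(q_4) < T(q_1)$; if instead $q_2$ vs $q_4$ is placed in $s+4$, we derive $T(q_5) < T(q_4) < T(q_2)$, but both remaining admissible rounds for $q_2$ vs $q_5$ (namely $s+3$ and $s+5$) force $T(q_2) < T(q_5)$. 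Either branch contradicts the existence of a linear ranking, completing the proof.
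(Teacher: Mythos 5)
Your proof is correct, and it handles the crux of the lemma by a genuinely different route than the paper. Both arguments start the same way: extract the five home-break patterns at rounds $s, s+2, \dots, s+8$, observe they belong to odd teams, and reuse the forced-match analysis from the proof of Theorem~\ref{thm:cps} to rule out the strongest of the five sitting at either of the two leftmost (and, by reflection, rightmost) positions. The difference is the middle pattern. The paper dispatches it by a pigeonhole: it argues that none of $a,b,d,e$ can be the strongest \emph{or the weakest} of the five, so the middle team $c$ would have to be both, which is absurd; this is very short but tacitly requires the mirrored ``weakest plays home against all three'' version of the Theorem~\ref{thm:cps} window argument, which the paper does not spell out. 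You instead use only the ``strongest plays away'' direction (which is literally what the proof of Theorem~\ref{thm:cps} establishes via Lemma~\ref{lem:rf-parity}) and pay for it with an explicit five-pattern forced-match analysis in the case $T^*=q_3$. I checked that analysis: the placements $q_3$--$q_2$ at $s+2$, $q_3$--$q_4$ at $s+5$, $q_3$--$q_1$ at $s$, $q_3$--$q_5$ at $s+7$, hence $q_1$--$q_2$ at $s+1$ and $q_4$--$q_5$ at $s+6$, are all forced as you claim, the only free rounds for $q_2$--$q_4$ are indeed $s+3$ and $s+4$, and both branches close with a cyclic ordering contradiction. So your proof is complete and slightly more self-contained, at the cost of extra casework; the paper's is slicker but leans on an unproved symmetric variant of its earlier argument.
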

\begin{proof}
    Consider a schedule with a D-sequence that contains the sub-sequence~$2222$, and say that breaks occur in rounds~$r$, $r+2$, $r+4$, $r+6$, and $r+6$.
    Let $a, b, c, d, e$ denote the teams that have a home-break in these rounds, respectively.
    Using the proof of Theorem~\ref{thm:cps}, team $a$ and $b$ cannot be the strongest team, nor the weakest team.
    By symmetry of the schedule, the same holds for team~$d$ and $e$, that can neither be the strongest nor the weakest.
    Hence, team~$c$ needs to be \emph{both} the strongest and the weakest team, a contradiction.
\end{proof}

\section{Examples from Practice}
\label{sec:practice}
There are many instances of tournaments that are organized as an SRR that feature an asymmetry.
Here we discuss the following examples: the Tata Steel Chess tournament, the Danish Soccer League when it was a triple round robin, and the Dutch baseball league.
The code and data used for this section can be found in our repository, available at \url{https://github.com/stenwessel/fair-schedules-srr-ranked-participants}.

We first highlight the 2002 edition of the Tata Steel Chess tournament, as mentioned in the prologue.
Table~\ref{tab:tata2002view2} shows in the rows the players of the tournament, ordered by their Elo rating.
The columns are the opponents for every player, also ordered by their strength, where every cell indicates whether the player plays its opponent with white or black.
It is clearly visible that for many players there is significant imbalance over the ranked opponents.
Figure~\ref{tab:tata-fscores} shows for several years of the tournament the value~$F$ of the schedule, which represents the amount of imbalance;
we give its definition and motivation in Section~\ref{sec:probdes}.

\begin{table}
    \centering
    \caption{Matches played with white and black for every player, ordered by the rank of their opponent, for the 2002 edition of the Tata Stell Chess tournament.}\label{tab:tata2002view2}
    \begin{tabular}{rllccccccccccccc}
        \toprule
    Rank & Player & Elo & \multicolumn{13}{l}{Opponents sorted by rank $\longrightarrow$} \\\midrule
    1. & Morozevich, A. & 2742 & \whitesq & \blacksq & \blacksq & \blacksq & \whitesq & \whitesq & \whitesq & \whitesq & \blacksq & \blacksq & \whitesq & \blacksq & \blacksq \\
    2. & Adams, M. & 2742 & \blacksq & \blacksq & \blacksq & \blacksq & \blacksq & \blacksq & \blacksq & \whitesq & \whitesq & \whitesq & \whitesq & \whitesq & \whitesq \\
    3. & Leko, P. & 2713 & \whitesq & \whitesq & \blacksq & \blacksq & \whitesq & \blacksq & \blacksq & \whitesq & \whitesq & \whitesq & \blacksq & \blacksq & \whitesq \\
    4. & Bareev, E. & 2707 & \whitesq & \whitesq & \whitesq & \blacksq & \whitesq & \blacksq & \blacksq & \whitesq & \whitesq & \whitesq & \blacksq & \blacksq & \blacksq \\
    5. & Gelfand, B. & 2703 & \whitesq & \whitesq & \whitesq & \whitesq & \whitesq & \blacksq & \blacksq & \whitesq & \whitesq & \blacksq & \blacksq & \blacksq & \blacksq \\
    6. & Van Wely, L. & 2697 & \blacksq & \whitesq & \blacksq & \blacksq & \blacksq & \blacksq & \blacksq & \whitesq & \whitesq & \whitesq & \blacksq & \whitesq & \whitesq \\
    7. & Kasimdzhanov, R. & 2695 & \blacksq & \whitesq & \whitesq & \whitesq & \whitesq & \whitesq & \whitesq & \blacksq & \blacksq & \blacksq & \blacksq & \blacksq & \blacksq \\
    8. & Khalifman, A. & 2688 & \blacksq & \whitesq & \whitesq & \whitesq & \whitesq & \whitesq & \blacksq & \whitesq & \blacksq & \blacksq & \blacksq & \blacksq & \blacksq \\
    9. & Lautier, J. & 2687 & \blacksq & \blacksq & \blacksq & \blacksq & \blacksq & \blacksq & \whitesq & \blacksq & \whitesq & \whitesq & \whitesq & \whitesq & \whitesq \\
    10. & Dreev, A. & 2683 & \whitesq & \blacksq & \blacksq & \blacksq & \blacksq & \blacksq & \whitesq & \whitesq & \blacksq & \whitesq & \whitesq & \whitesq & \whitesq \\
    11. & Grischuk, A. & 2671 & \whitesq & \blacksq & \blacksq & \blacksq & \whitesq & \blacksq & \whitesq & \whitesq & \blacksq & \blacksq & \whitesq & \whitesq & \whitesq \\
    12. & Piket, J. & 2659 & \blacksq & \blacksq & \whitesq & \whitesq & \whitesq & \whitesq & \whitesq & \whitesq & \blacksq & \blacksq & \blacksq & \blacksq & \blacksq \\
    13. & Gurevich, M. & 2641 & \whitesq & \blacksq & \whitesq & \whitesq & \whitesq & \blacksq & \whitesq & \whitesq & \blacksq & \blacksq & \blacksq & \whitesq & \blacksq \\
    14. & Timman, J. & 2605 & \whitesq & \blacksq & \blacksq & \whitesq & \whitesq & \blacksq & \whitesq & \whitesq & \blacksq & \blacksq & \blacksq & \whitesq & \whitesq \\ \bottomrule
    \end{tabular}
\end{table}

\begin{figure}
    \centering
    \includegraphics[width=.7\textwidth]{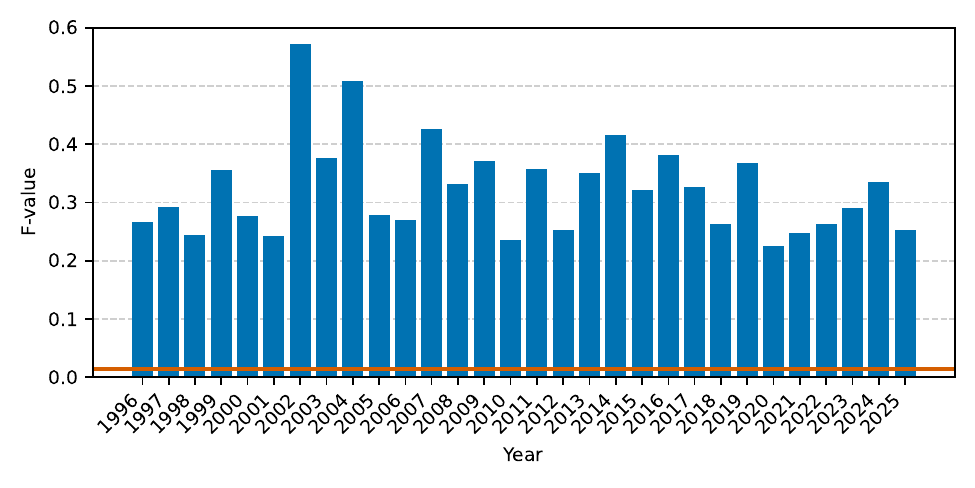}
    \caption{The imbalance value~$F$ for the schedule of several editions of the Tata Steel Chess tournament in which the tournament was organized with 14 players. The horizontal line indicates the optimal $F$-value for 14 players.}\label{tab:tata-fscores}
\end{figure}

The Danish Superliga soccer league was organized between seasons 1995--1996 and 2015--2016 as a triple round-robin tournament with 12~participating teams.
Every team plays every other team either once or twice at home.
We consider the team that plays the other team twice at home as having the home field advantage between the two teams.
Table~\ref{tab:dansocc2008} shows the home advantage for each team, with the opponents sorted by rank.
The ranking of the teams is based on the results of the previous season.
The imbalance for the 2008--2009 season equals~$F = 0.476$.

\begin{table}
    \centering
    \caption{Home field advantage (white square) for every team, ordered by the rank of their opponent, for the 2008--2009 season of the Danish Superliga.}\label{tab:dansocc2008}
    \begin{tabular}{rlccccccccccc}
        \toprule
    Rank & Team & \multicolumn{11}{l}{Opponents sorted by rank $\longrightarrow$} \\\midrule
        1. & Aalborg BK & \whitesq & \blacksq & \whitesq & \whitesq & \blacksq & \whitesq & \blacksq & \whitesq & \whitesq & \blacksq & \blacksq \\
        2. & FC Midtjylland & \blacksq & \blacksq & \whitesq & \whitesq & \blacksq & \whitesq & \blacksq & \whitesq & \whitesq & \whitesq & \blacksq \\
        3. & FC København & \whitesq & \whitesq & \whitesq & \blacksq & \blacksq & \whitesq & \blacksq & \whitesq & \whitesq & \blacksq & \blacksq \\
        4. & Odense BK & \blacksq & \blacksq & \blacksq & \blacksq & \whitesq & \whitesq & \whitesq & \whitesq & \whitesq & \blacksq & \whitesq \\
        5. & AC Horsens & \blacksq & \blacksq & \whitesq & \whitesq & \whitesq & \blacksq & \whitesq & \blacksq & \blacksq & \whitesq & \whitesq \\
        6. & Randers FC & \whitesq & \whitesq & \whitesq & \blacksq & \blacksq & \whitesq & \blacksq & \blacksq & \whitesq & \whitesq & \blacksq \\
        7. & Esbjerg fB & \blacksq & \blacksq & \blacksq & \blacksq & \whitesq & \blacksq & \blacksq & \whitesq & \whitesq & \whitesq & \whitesq \\
        8. & Brøndby IF & \whitesq & \whitesq & \whitesq & \blacksq & \blacksq & \whitesq & \whitesq & \blacksq & \blacksq & \blacksq & \blacksq \\
        9. & FC Nordsjælland & \blacksq & \blacksq & \blacksq & \blacksq & \whitesq & \whitesq & \blacksq & \whitesq & \blacksq & \whitesq & \whitesq \\
        10. & Aarhus GF & \blacksq & \blacksq & \blacksq & \blacksq & \whitesq & \blacksq & \blacksq & \whitesq & \whitesq & \whitesq & \whitesq \\
        11. & Vejle BK & \whitesq & \blacksq & \whitesq & \whitesq & \blacksq & \blacksq & \blacksq & \whitesq & \blacksq & \blacksq & \whitesq \\
        12. & Sønderjyske & \whitesq & \whitesq & \whitesq & \blacksq & \blacksq & \whitesq & \blacksq & \whitesq & \blacksq & \blacksq & \blacksq \\\bottomrule
    \end{tabular}
\end{table}

The Dutch baseball league has a regular season with nine teams organized as a triple round robin, where every team plays every other team either once or twice at home.
For the 2024 season, Table~\ref{tab:hb2024} shows the home advantage for each team, with the opponents sorted by rank.
The ranking is based on the results of the previous season.
For this schedule, $F = 0.497$.

\begin{table}
    \centering
    \caption{Home field advantage (white square) for every team, ordered by the rank of their opponent, for the 2024 season of the Dutch baseball league.}\label{tab:hb2024}
    \begin{tabular}{rlcccccccc}
        \toprule
        Rank & Team & \multicolumn{8}{l}{Opponents sorted by rank $\longrightarrow$} \\\midrule
        1. & Amsterdam Pirates & \blacksq & \whitesq & \whitesq & \blacksq & \whitesq & \blacksq & \whitesq & \blacksq \\
        2. & Curaçao Neptunus & \whitesq & \blacksq & \blacksq & \blacksq & \blacksq & \whitesq & \whitesq & \whitesq \\
        3. & HCAW & \blacksq & \whitesq & \whitesq & \whitesq & \whitesq & \blacksq & \blacksq & \blacksq \\
        4. & RCH-Pinguïns & \blacksq & \whitesq & \blacksq & \whitesq & \whitesq & \whitesq & \blacksq & \blacksq \\
        5. & Oosterhout Twins & \whitesq & \whitesq & \blacksq & \blacksq & \blacksq & \whitesq & \blacksq & \whitesq \\
        6. & DSS/Kinheim & \blacksq & \whitesq & \blacksq & \blacksq & \whitesq & \whitesq & \blacksq & \whitesq \\
        7. & Hoofddorp Pioniers & \whitesq & \blacksq & \whitesq & \blacksq & \blacksq & \blacksq & \whitesq & \whitesq \\
        8. & Quick Amersfoort & \blacksq & \blacksq & \whitesq & \whitesq & \whitesq & \whitesq & \blacksq & \blacksq \\
        9. & UVV & \whitesq & \blacksq & \whitesq & \whitesq & \blacksq & \blacksq & \blacksq & \whitesq \\\bottomrule
    \end{tabular}
\end{table}

\subsection*{Acknowledgements}
This research is supported by NWO Gravitation Project NETWORKS, Grant Number 024.002.003.

\end{document}